\newcommand{\beq}{\begin{eqnarray}}
\newcommand{\eeq}{\end{eqnarray}}
\newcommand{\bq}{\begin{equation}}
\newcommand{\eq}{\end{equation}}
\newcommand{\beqn}{\begin{eqnarray*}}
\newcommand{\eeqn}{\end{eqnarray*}}
\newcommand{\Lam}{\Lambda}
\newcommand{\M}{{\omega}}
\renewcommand{\d}{\ensuremath{\mathrm{d}}}
\newcommand{\R}{\ensuremath{\mathbb{R}}}
\newcommand{\N}{\ensuremath{\mathbb{N}}}
\newcommand{\eps}{\ensuremath{\varepsilon}}
\newcommand{\verti}[1]{\ensuremath{\left\lvert #1 \right\rvert}}
\newcommand{\vertii}[1]{\ensuremath{\left\lVert #1 \right\rVert}}
\newcommand{\vertiii}[1]{{\vert\kern-0.25ex\vert\kern-0.25ex\vert #1
    \vert\kern-0.25ex\vert\kern-0.25ex\vert}}
\renewcommand{\d}{\ensuremath{{\rm d}}}
\newcommand{\bD}{\mathbf{D}}
\newcommand{\ignore}[1]{}
\newtheorem{theorem}{Theorem}[section]
\newtheorem{definition}[theorem]{Definition}
\newtheorem{proposition}[theorem]{Proposition}
\newtheorem{remark}[theorem]{Remark}
\newtheorem{lemma}[theorem]{Lemma}
\newtheorem{corollary}[theorem]{Corollary}
\numberwithin{equation}{section}
\title[Self-similar solutions to thin--film equations]{Existence and regularity of source-type self--similar  solutions for stable thin-film equations}
\author[ M. Majdoub and S. Tayachi]{Mohamed Majdoub and Slim Tayachi}
\address[M. Majdoub]{Department of Mathematics, College of Science, Imam Abdulrahman Bin Faisal University, P. O. Box 1982, Dammam, Saudi Arabia.\newline Basic and Applied Scientific Research Center, Imam Abdulrahman Bin Faisal University, P.O. Box 1982, 31441, Dammam, Saudi Arabia}
\email{\sl mmajdoub@iau.edu.sa}
\address[S. Tayachi]{Universit\'e de Tunis El Manar, Facult\'e des Sciences de Tunis, D\'epartement de math\'ematiques, Laboratoire \'equations aux d\'eriv\'ees partielles (LR03ES04), 2092 Tunis, Tunisie}
\email{\sl slim.tayachi@fst.rnu.tn}
\begin{document}
\begin{abstract}
We investigate the existence and the boundary regularity of source-type self-similar
solutions to the thin-film equation
$h_t=-(h^nh_{zzz})_z+(h^{n+3})_{zz},$ $ t>0,\; z\in \R;\;
h(0,z)= \M \delta(z)$ where $n\in (3/2,3),\; \M > 0$ and $\delta$ is
the Dirac mass at the origin. It is known that the
leading order expansion near the edge of the support coincides with
that of a traveling-wave solution for the standard thin-film equation:
$h_t=-(h^nh_{zzz})_z$. In this paper we sharpen this result,
proving that the higher order corrections are analytic with respect
to three variables: the first one is just the {spatial} variable,
whereas the second and the third (except for $n = 2$) are irrational
powers of it. It is known  that this third variable
does not appear for the thin-film equation without gravity.
\end{abstract}

\subjclass[2010]{35Q35, 35C06, 35K55,
35K65, 35B40, 35B65, 34B16.}
\keywords{Fourth order degenerate parabolic equations, stable thin-film equations,
free boundary problems, self-similar solutions, source-type solutions, existence, regularity.}

\date{\today}

\maketitle


\section{Introduction}
In this paper we study the existence and regularity of  source-type self-similar solutions to the thin-film equation:
\begin{subequations}\label{freebdry}
\begin{align}
h_t + (h^n h_{zzz})_z & = (h^{n+3})_{zz}  \quad \mbox{for $t > 0$ and $z \in (Z_-(t), Z_+(t))$},\label{tfe}\\
h = h_z & = 0 \quad \mbox{for $t > 0$ and at $z = Z_{\pm}(t)$},\label{bdry1}\\
\dot Z_{\pm}(t)& = \lim_{z \to Z_{\pm}(t)} h^{n-1}\,h_{zzz}  \quad \mbox{for $t > 0$},\label{bdry2}\\
h(0,z) &= \M\delta(z). \label{bdry3}
\end{align}
\end{subequations}
The function $h = h(t,z)>0$ describes the height or the thickness of a two-dimensional viscous thin-film on a one-dimensional flat solid as a function of time $t > 0$ and the lateral variable $z$. {The parameter $n>0$ stands for the mobility exponent, $\omega>0$ represents the mass and $\delta$ is the Dirac distribution at the origin. Here we are concerned with the range $n\in (3/2,3)$.} The term $(h^{n+3})_{zz}$  represents the effect of the gravity. The plus sign in front of $(h^{n+3})_{zz}$ leads to a stabilizing term (droplet on the ground) as opposed to
the droplet at the ceiling (destabilization). The functions $Z_{\pm}(t)$ define the boundary of the droplet, to which we refer to as contact lines due to their analog for three-dimensional films. Then {conditions}~\eqref{bdry1}$_1$ merely define the contact lines, whereas conditions~\eqref{bdry1}$_2$ state that the contact angle between the liquid-gas and liquid-solid interfaces vanishes (commonly referred to as ``complete wetting regime''). {Conditions}~\eqref{bdry2} {are} of kinematic character. {They state} that the (vertically averaged) velocity of the film $h^{n-1}h_{zzz}$ at the contact lines equals the contact line velocities. One then easily verifies that the mass $\displaystyle\int_{Z_-(t)}^{Z_+(t)} h(t,z) \, \d z$ is a conserved quantity.
 See \cite{NS, ODB1997, OR1992, TK2004} for a survey and more explanations. See also the references \cite{Bertozzi98, BB2001, BP1996, Myers98, ODB1997}.\\

The source-type self-similar solutions of the standard thin-film equation
\begin{equation}
\label{stfe}
h_t + (h^n h_{zzz})_z = 0
\end{equation}
has been studied by many authors, see \cite{GGO} and references therein. In particular, the existence and the asymptotic behavior was established in \cite{BPW}. Uniqueness in the class of even solutions was proved in \cite{BPW} and recently the unconditional uniqueness was obtained in \cite{MMT2018}. The asymptotic behavior given in \cite{BPW} is refined in \cite{GGO}. The result of \cite{BPW} was extended to the thin-film equation with gravity \eqref{tfe} in \cite{B}, but without proving uniqueness and it is shown that the leading term is the same as for \eqref{stfe}. Our aim, as in \cite{GGO}, is to refine the asymptotic behavior obtained in  \cite{B}. Since no uniqueness results are known for even source-type self-similar solutions of \eqref{tfe}, we will not necessarily expand the
solutions obtained in \cite{B}. Precisely, we prove the existence of even source-type self-similar solutions and give their
refined asymptotics. In fact only the first expansion is given in \cite{B}.

{
A slightly more general version of the stable thin-film equation is
given by
\begin{equation*}
\label{FODTF}
h_t + (h^n h_{zzz})_z  = (h^{m})_{zz},
\end{equation*}
where $n, m>0.$ This equation is relevant to surface tension dominated motion of thin viscous films and spreading droplets. The second-order term in the equation, $(h^{m})_{zz}$, arises as a cut off of van der Waals interactions \cite{Gre, BP1994}.}
{In the case $m=n+3$, the last equation enjoys a mass invariant scaling transformation \cite{B, LW}.}

If $h$ is a solution of \eqref{tfe}, then
\[
h_\lambda(t,z)=\lambda h\left(\lambda^{n+4}t,\lambda z\right), \; \lambda >0,
\]
is a solution of \eqref{tfe} on $\Big(\lambda^{-1}\,Z_{-}(\lambda^{n+4}t),\lambda^{-1}\,Z_{+}(\lambda^{n+4}t)\Big)$. Self-similar solutions are such that $h_\lambda\equiv h,$ for all $\lambda>0.$ Taking $\lambda=t^{-{\frac{1}{n+4}}}$ we see that $h$ is a self-similar solution if and only if
\begin{equation}\label{selfsimilar}
h(t,z)= t^{-{\frac{1}{n+4}}}\mathcal{H}\left(t^{-{\frac{1}{n+4}}}z\right),\quad Z_{\pm}(t)=t^{{\frac{1}{n+4}}}\,Z_{\pm}(1),
\end{equation}
where $\mathcal{H}:=h(1,\cdot)$ is the profile of the self-similar solution. We look for regular profiles $\mathcal{H}: \R\longrightarrow [0,\infty)$ that are even  and have compact support $[-a,a],\; a>0,$ $\mathcal{H}>0$ on $(-a,a)$ implying $Z_\pm(t) = \pm a t^{\frac{1}{n+4}}$. Since $\mathcal{H}$ is even, $\mathcal{H}'(0)=0$. By ~\eqref{bdry1} we have
\[
\mathcal{H}=\mathcal{H}'=0\mbox{ at } \pm a.
\]
The conservation of mass, together with ~\eqref{selfsimilar}-\eqref{bdry3}, gives
$$
\int_\R h(t,z)dz =\int_{-a}^a
\mathcal{H}(z)dz=\M.
$$
Since $h$ satisfies equation \eqref{tfe}, $\mathcal{H}$ satisfies equation \eqref{tfeself} below, where we used \eqref{bdry2} after integration.

Hence, we have to look for pairs $\left(a, \mathcal{H}\right)$ solving the problem
\begin{subequations}\label{freebdryself}
\begin{align}
\mathcal{H}^n\mathcal{H}^{'''} & = {\frac{1}{n+4}}y\mathcal{H}+(n+3)\mathcal{H}^{n+2}\mathcal{H}'
  \quad \mbox{for $y \in (-a, a)$},\label{tfeself}\\
&\; \mathcal{H} = \mathcal{H}' = 0 \quad \mbox{at $y= \pm a$},\;  \label{bdryself1}\\
\mathcal{H}(-y)&=\mathcal{H}(y)>0 \quad \mbox{on $(-a,a)$},\label{bdryself2}\\
\int_{-a}^a \mathcal{H}(y)\d y &= \M>0. \label{bdryself3}
\end{align}
\end{subequations}
Clearly \eqref{bdryself2} implies $\mathcal H^\prime(0) = 0$.
Let
\[
\tilde{\mathcal{H}}(y)=(n+4)^{{\frac{1}{n}}}a^{-{\frac{4}{n}}}\mathcal{H}(ay),\; \; y\in (-1,1)
\]
and
\begin{equation}
\label{mu} \mu=(n+3)(n+4)^{-{\frac{2}{n}}}a^{2+{\frac{8}{n}}}.
\end{equation}
Then $\tilde{\mathcal{H}}$ solves the problem
\begin{subequations}\label{freebdryselft}
\begin{align}
\tilde{\mathcal{H}}^n\tilde{\mathcal{H}}^{'''} & =
y\tilde{\mathcal{H}}+\mu\tilde{\mathcal{H}}^{n+2}\tilde{\mathcal{H}}'
  \quad \mbox{for $y \in (-1, 1)$},\label{tfeselft}\\
\tilde{\mathcal{H}}'(0) &= 0,\; \tilde{\mathcal{H}} = \tilde{\mathcal{H}}' = 0 \quad \mbox{at
$y= \pm 1$},\;  \label{bdryselft1}\\
\tilde{\mathcal H}(-y)&=\tilde{\mathcal H}(y)>0 \quad \mbox{on $(-1,1)$},\label{bdryselft2}\\
\int_{-1}^1 \tilde{\mathcal{H}}(y) \d y &= {\sqrt{n+3}\over
\sqrt{\mu}}\M:=\kappa(\mu)>0. \label{bdryselft3}
\end{align}
\end{subequations}
Let us apply the shift $y=-1+x$ and put
$H(x)=\tilde{\mathcal{H}}(-1+x).$ Using the symmetry of
$\tilde{\mathcal{H}},$ the problem reduces to finding a pair $(\mu, H)\in (0,\infty)\times C^1\big([0,1]\big)\cap C^3\big((0,1)\big)$ such that
\begin{subequations}\label{freebdryselfts}
\begin{align}
H^{n-1}H^{'''} & = -1+x+\mu H^{n+1}H',
  \quad \mbox{ $x \in (0, 1]$},\label{tfeselfts}\\
H(0) & = H'(0) = 0,  \label{bdryselfts1}\\
H'(1)& = 0,\label{bdryselfts2}\\
\int_{0}^1 H(y) \d y &={1\over 2}\kappa(\mu)>0. \label{bdryselfts3}
\end{align}
\end{subequations}
As in \cite{GGO}, we denote by
\begin{equation} \label{Htw}
H_{\mathrm{TW}}(x):=A^{-{\nu\over 3}}x^\nu,\,\forall\, x>0,
\end{equation}
a traveling-wave profile to \eqref{stfe}, i.e. a solution of
\begin{subequations}\label{freebdrytw}
\begin{align}
H_{\mathrm{TW}}^{n-1}H_{\mathrm{TW}}^{'''} & = -1,
  \quad \mbox{ $x>0$},\label{tfetw}\\
H_{\mathrm{TW}}(0) & = H_{\mathrm{TW}}'(0) = 0,  \label{bdrytw1}
\end{align}
\end{subequations}
where
\begin{equation}
\label{Anu} \nu:={3\over n},\; A=\nu(\nu-1)(2-\nu) .
\end{equation}
Clearly, $n\in (3/2,3)$ implies $\nu\in (1,2)$ and $A>0$.

 The traveling-wave profile $H_{\mathrm{TW}}$ solves the leading order equation of \eqref{tfeselfts} for $x \ll 1$ (see Lemma \ref{Lexpansions}). Therefore, the solution to \eqref{freebdryselfts} will have the same leading order asymptotic as $x \searrow 0$. The existence of solutions to \eqref{freebdryselfts} which behave like $H_{\mathrm{TW}}$ as $x\searrow 0$ is proved by Beretta. See \cite[Theorem 5.1, p. 760]{B}.

 Our aim is to prove the existence of solutions to \eqref{freebdryselfts} and give a more refined asymptotic than that of \cite{B}.  We now give the main result of this paper.
\begin{theorem}
\label{th1}
Let $3/2<n<3$. Then we have the following:

 (i) There exists $\eps>0$ such that for any $\mu>0$ there exists a solution $H_\mu\in C^1\big([0,1]\big)\cap C^3\big((0,1))$ of \eqref{tfeselfts}-\eqref{bdryselfts1}-\eqref{bdryselfts2} satisfying
\begin{equation}
\label{development}
\hspace{-0.45cm} H_\mu(x)= A^{-{\nu\over
3}}x^\nu\left(1+\bar{u}\left(x,b(\mu)x^\beta,\mu x^\gamma\right)\right),\, 0\leq x\leq \min\left\{\eps^2, {\left({\eps\over b(\mu)}\right)}^{{1\over\beta}}, {\left({\eps^2\over\mu}\right)}^{{1\over\gamma}}\right\}
\end{equation}
for some $b(\mu)>0$, where $\bar{u}(x_1, x_2, x_3) : [0,\eps^2]\times[0,\eps]\times[0,\eps^2]\to \R$ is an analytic function with
\[
\bar{u}(0,0,0)=0,\; \partial_2\bar{u}(0,0,0)<0,
\]
and $\nu$, $A$ are given by \eqref{Anu} and
\begin{equation}
\label{betagamma} \beta:={\sqrt{-3\nu^2+12\nu-8}-3\nu+4\over 2},\; \gamma:=2(1+\nu).
\end{equation}

 (ii) There exists $\bar{\mu}>0$ such that the solution $H_{\bar{\mu}}$ satisfies also \eqref{bdryselfts3}.
\end{theorem}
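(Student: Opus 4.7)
The natural approach is to look for $H_\mu$ in the perturbative form $H_\mu(x) = H_{\mathrm{TW}}(x)(1+u(x))$, treating the gravity term $\mu H^{n+1}H'$ and the linear drift $x$ as perturbations of the travelling-wave equation \eqref{tfetw}. Substituting into \eqref{tfeselfts} and using $H_{\mathrm{TW}}^{n-1}H_{\mathrm{TW}}''' = -1$ yields, after multiplication by $x/H_{\mathrm{TW}}^{n-1}$, a singular ODE of the form $\mathcal{L}u = x + \mathcal{N}(u) + \mu\, x\, H_{\mathrm{TW}}^2(1+u)^{n+1}(\nu(1+u) + xu')$, where $\mathcal{L}$ is the Euler-type linearization of $H^{n-1}H'''$ about $H_{\mathrm{TW}}$. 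The indicial polynomial of $\mathcal{L}$ at $x=0$ has roots $0, 1$ and two further roots; the quadratic $\beta^2 + (3\nu-4)\beta + (2-\nu)(1-\nu)n = 0$ governs the non-trivial pair, and formula \eqref{betagamma} selects its admissible root $\beta\in(0,1)$. Similarly, inserting a power $x^\sigma$ into the gravity source forces $\sigma = \gamma = 2(1+\nu)$, a genuinely different scale.

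\textbf{Analytic ansatz in three variables.} Because the formal power series for $u$ involves the three independent scales $x,\ bx^\beta,\ \mu x^\gamma$ with $\beta$ generically irrational, I would make these scales independent and seek $\bar u(x_1,x_2,x_3)$ analytic on a polydisk, solution of the PDE obtained by substituting $u(x)=\bar u(x,bx^\beta,\mu x^\gamma)$ into the ODE and replacing $\frac{d}{dx}$ by $\partial_1 + \beta x_2 x_1^{-1}\partial_2 + \gamma x_3 x_1^{-1}\partial_3$. The multiplication by $x_1$ absorbs the apparent singularity, and the linearization at $\bar u=0$ becomes $P(x_1\partial_1,\beta x_2\partial_2,\gamma x_3\partial_3)\bar u = $ analytic source, where $P$ is a polynomial whose spectrum on monomials $x_1^i x_2^j x_3^k$ is $P(i,\beta j,\gamma k)$. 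One then checks a non-resonance condition: $P(i,\beta j,\gamma k)\ne 0$ for $(i,j,k)\in\mathbb{N}^3\setminus\{0\}$, so that the Taylor coefficients of $\bar u$ can be computed by recursion. Convergence of the formal series is established by a majorant/Nagumo argument (or a contraction in a Gevrey-type norm), giving an analytic $\bar u$ on $[0,\varepsilon^2]\times[0,\varepsilon]\times[0,\varepsilon^2]$ with $\bar u(0,0,0)=0$. The linear term in $x_2$ has coefficient $1$ (by the normalization $x_2 = b x^\beta$), so specifying $\partial_2\bar u(0,0,0)<0$ amounts to choosing the right sign of $b(\mu)$; the solution is extended to $[0,1]$ by standard ODE continuation where $H>0$, and $b(\mu)$ is then tuned by a shooting argument to enforce the boundary condition $H'(1)=0$ from \eqref{bdryselfts2}.

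\textbf{Part (ii).} With the family $\{H_\mu\}_{\mu>0}$ in hand, define $\mathcal{M}(\mu) := \int_0^1 H_\mu(y)\,\d y$. The condition \eqref{bdryselfts3} reads $\mathcal{M}(\mu) = \tfrac{1}{2}\kappa(\mu) = \tfrac{\sqrt{n+3}}{2\sqrt\mu}\,\omega$. Continuous dependence of the shooting parameter $b(\mu)$ on $\mu$ (from the implicit function theorem applied to the condition $H_\mu'(1)=0$) yields continuity of $\mathcal{M}$, while $\tfrac{1}{2}\kappa(\mu) \to \infty$ as $\mu\searrow 0$ and $\to 0$ as $\mu\to\infty$. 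Provided $\mathcal{M}$ remains bounded away from $0$ and $\infty$ on compacta (which follows from the a priori bounds inherent in \eqref{tfeselfts} together with the normalization $H(0)=0$, $H'(1)=0$), the intermediate value theorem supplies $\bar\mu>0$ solving the equation.

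\textbf{Main obstacle.} The decisive step is proving the analytic solvability of the three-variable PDE for $\bar u$. This requires (a) a careful derivation to confirm the PDE has analytic coefficients and regular structure in a polydisk, (b) an explicit check that the indicial polynomial does not vanish on $\{(i,\beta j,\gamma k)\}$, which exploits the irrationality/transcendence properties of $\beta$ and $\gamma$ for $\nu\in(1,2)$ (and in the statement excludes $n=2$, precisely when $\gamma = 2(1+\nu)$ would collide with integer powers), and (c) majorant bounds that survive the division by the indicial symbol. The rest—matching leading asymptotics, shooting in $b$, then shooting in $\mu$—is then a comparatively routine continuation and intermediate-value argument.
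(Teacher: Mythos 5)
Your overall strategy is the same as the paper's: factor off the travelling-wave profile $H_{\mathrm{TW}}=A^{-\nu/3}x^\nu$, pass to the degenerate Euler-type operator $p(D)$ with $D=x\partial_x$, unfold the singularity in the three variables $(x,bx^\beta,\mu x^\gamma)$, construct an analytic solution of the resulting three-variable equation near the origin, extend by standard ODE theory, shoot in $b$ to impose $H'(1)=0$, and shoot in $\mu$ for the mass constraint. Where you diverge is in the treatment of the linear operator and in the rigor of the mass argument, and both places contain real gaps.

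\textbf{The resonance you dismiss is actually the crux.} You propose to solve the linearized three-variable equation by verifying a non-resonance condition $P(i,\beta j,\gamma k)\neq 0$ for $(i,j,k)\in\N^3\setminus\{0\}$ and then running a majorant/Nagumo argument. But the indicial polynomial here is cubic, $p(\xi)=(\xi+1)(\xi-\alpha)(\xi-\beta)$ with $\alpha\in(-2,0)$, $\beta\in(0,1)$ (not a quartic with roots $0,1$ as you write, and your quadratic's constant term $(2-\nu)(1-\nu)n$ differs from the correct $-3(\nu-1)(2-\nu)$ by a factor of $\nu$). The symbol on the monomial $x_1^i x_2^j x_3^k$ is $p(i+\beta j+\gamma k)$, and this vanishes exactly when $i+\beta j+\gamma k=\beta$, i.e.\ at $(i,j,k)=(0,1,0)$. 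So the non-resonance condition you want to ``check'' is false, precisely because the homogeneous solution $x^\beta$ is the one that survives the boundary conditions at $x=0$; that is why $b$ is a free parameter in the first place. The paper handles this by constructing the resolvent as a composition of explicit integral operators $T=T_\beta T_{-1}T_\alpha$ with $T_\Lambda\bar f=\int_0^1 r^{-\Lambda-1}\bar f(r x_1,r^\beta x_2,r^\gamma x_3)\,\d r$, with $T_\beta$ well-defined exactly because the source satisfies $(\bar f,\partial_2\bar f)(0,0,0)=(0,0)$; it then proves analyticity by a contraction mapping in weighted $C^N$-norms, not by a formal power series. Your majorant argument would have to be restricted to the complement of the resonant mode with a compatible choice of source space, and you would need quantitative bounds on $1/p(i+\beta j+\gamma k)$ for all non-resonant $(i,j,k)$; since $\beta$ and $\gamma$ are irrational this is a Diophantine small-divisor issue that you do not address. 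That is a genuine gap, not a routine verification.

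\textbf{Part (ii) is hand-waved where the paper works hardest.} You rely on ``a priori bounds inherent in'' the ODE to keep $\mathcal{M}(\mu)=\int_0^1 H_\mu$ bounded away from zero, but this is exactly what needs to be proved, and it is not at all immediate: as $\mu\to\infty$ the scale $\hat x_{b,\mu}$ on which the analytic ansatz is valid shrinks like a negative power of $\mu$, and one must establish a lower bound $\int_0^1 H_{\bar b(\mu),\mu}\gtrsim(\hat x_{\bar b(\mu),\mu})^\nu$ (Proposition~\ref{p5.06}) and then run a contradiction argument (Proposition~\ref{barmu}) using a Taylor expansion of $H_{\bar b(\mu),\mu}-H_{\mathrm{TW}}$ around $\hat x_{\bar b(\mu),\mu}$ and the monotonicity of $H_{b,\mu}$ in $b$ to show that otherwise $H_{\bar b(\mu),\mu}$ becomes negative for large $\mu$. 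This analysis, together with the comparison lemmas for third-order ODEs it invokes (Lemma~\ref{L5.01} and the appendix results), is where most of the work of Section~5 is concentrated; your proposal does not identify it.
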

{The previous theorem proves that the higher order corrections are analytic with respect to three variables: the first one is just the {spatial} variable,
whereas the second and the third (except for $n = 2$) are irrational
powers of it. It is known  that this third variable
does not appear for the thin-film equation without gravity \cite{GGO}. This shows the impact of the gravity on the regularity. See \eqref{DASYMP} below where we show that  $\partial_1\bar{u}(0,0,0)>0$ and $\partial_3\bar{u}(0,0,0)>0$.} {The fractional power $\beta$ is obtained when linearizing \eqref{tfeselfts} around the traveling-wave $H_{TW}$ and appears as a root of a polynomial $p$. See Section 2 below.}

The proof of Theorem \ref{th1} mainly uses the method introduced in \cite{GGO} and some ideas in \cite{B}. We first construct a local solution to \eqref{tfeselfts}-\eqref{bdryselfts1} which is analytic in the three variables $x,\; b\,x^\beta$ and $\mu\,x^\gamma$. To do this, we unfold the singular behavior and construct a local solution for the resulting nonlinear partial differential equation. See \eqref{41}-\eqref{41_bc} and Proposition \ref{p2} below. Our approach in the rest of the proof is based on a shooting argument with respect to two parameters. We shoot with respect to the parameter $b>0$ to fulfill the boundary condition \eqref{bdryselfts2}. See Proposition \ref{P5.01} below. Finally, by shooting with respect to the parameter $\mu>0$ we fulfill the mass condition \eqref{bdryselfts3}. See Proposition \ref{barmu} below.

 Let us mention that in \cite{GGO}, the shooting argument is done only with respect to one parameter. Indeed, by scaling argument we can reach any mass $\omega>0$ from any given solution of  \eqref{tfeselfts}-\eqref{bdryselfts1}-\eqref{bdryselfts2} without gravity which is not possible in our case. This justify why we need Part (ii) in the previous theorem.

\begin{remark}
{\rm Our arguments are also valid to construct  local regular solutions of \eqref{tfeselfts}-\eqref{bdryselfts1} with $\mu$ replaced by $-\mu,$  which holds when studying source-type self-similar solutions for the unstable thin-film equation   (this is the droplet at the ceiling)
\[
h_t=-(h^nh_{zzz})_z-(h^{n+3})_{zz}.
\]
See the proof of Proposition \ref{p2} below. To construct regular solutions satisfying the hole of the problem \eqref{freebdryselfts}, we think that the mass $\M$ should be less then the critical mass $\M_c=2\pi \sqrt{2/3}.$ See  \cite[p. 237]{WBB},   \cite[reference 6, p. 254]{WBB}, \cite[footnote, p.1711]{SP} and \cite{LW} for this restriction on $\M$. See also \cite{BP1998, BP2000, EGK2007I, EGK2007II, NS, S2009, SP, WBB} for the unstable thin-film equation. We mention also that self-similar solutions to stable thin-film equation related to \eqref{tfe} are done in \cite{EG2011I,EG2011II}.}
\end{remark}

The rest of this paper is devoted to the proof of the main result, that is Theorem \ref{th1}.
Section 2 deals with the unfolding of the singularity in the three variables $x,\, b\,x^\beta$ and  $\mu\,x^\gamma$. Section 3 is devoted to the study of the related linear problem. In the forth section, we prove the local existence for the nonlinear problem. Section 5 is devoted to the shooting arguments in order to obtain the desired existence and regularity. In the sequel, $C$ will
be used to denote a constant which may vary from line to line.
We also use $A\lesssim B$ to denote an estimate of the form $A\leq C B$
for some absolute constant $C$, $A\approx B$ if $A\lesssim B$ and $B\lesssim A$ and $A\ll B$ if $A$ is sufficiently small with respect to $B$. Finally, we use the notation $\N_0=\N\cup\{0\}=\{0,1,2,\cdots\}.$

\section{Unfolding of the singularity}
As in \cite{GGO}, we factor off the leading order behavior $H_{\mathrm{TW}} = A^{-{\frac \nu 3}}x^\nu$, i.e.
\begin{equation}
\label{30} H(x)=:A^{-{\nu \over 3}}x^\nu F(x).
\end{equation}
{Motivated by \cite{B}, we impose}
\begin{equation}
\label{31} F(0)=1.
\end{equation}
Equation \eqref{tfeselfts} becomes
\begin{equation}
\label{32} F^{n-1}q(D)F=A(-1+x)+\mu A^{-{2\nu \over
3}}x^{2\nu+2}F^{n+1}(D+\nu)F,
\end{equation}
where{, as in \cite{GGO}, the scaling-invariant logarithmic derivative operator $D$ is defined by}
\begin{equation}
\label{33} D:=x\partial_x=\frac{d}{ds}, \;\; s:=\ln x,
\end{equation}
and the polynomial $q$ is given by
\begin{equation}
\label{35} q(\xi)=(\xi+\nu)(\xi+\nu-1)(\xi+\nu-2).
\end{equation}

Put
\[
F(x)=:1+u(x).
\]
Then since $q(D)1=-A,$ we have,
\begin{eqnarray*}
F^{n-1}q(D)F &=& (1+u)^{n-1}q(D)\big(1+u\big)\\
&=&-A(1+u)^{n-1}+(1+u)^{n-1}q(D)u \\
&=&-A-A[(1+u)^{n-1}-1]+(1+u)^{n-1}q(D)u \\
&=&-A-A\big[(1+u)^{n-1}-1-(n-1)u\big]+\big[(1+u)^{n-1}-1\big]q(D)u\\&&+q(D)u-(n-1)Au\\
&=&-A-A\big[(1+u)^{n-1}-1-(n-1)u\big]+\big[(1+u)^{n-1}-1\big]q(D)u\\&&+p(D)u,
\end{eqnarray*}
where
\begin{equation*}
\label{36} p(D)u=q(D)u-(n-1)Au.
\end{equation*}
Hence, using \eqref{35}, the polynomial $p(\xi)$ is given by
\begin{eqnarray*}
\label{37}
p(\xi)&=&\xi^3+3(\nu-1)\xi^2+(3\nu^2-6\nu+2)\xi-3(\nu-1)(2-\nu)\\
&=& \label{38} (\xi+1)(\xi-\alpha)(\xi-\beta)
\end{eqnarray*}
where $\beta$ is given by \eqref{betagamma} and $\alpha$ is given by
\begin{equation*}
\label{36bis}\alpha := {{-\sqrt{-3\nu^2+12\nu-8}-3\nu+4\over 2}}.
\end{equation*}
Clearly since $n\in (3/2,3),$ then $\alpha\in (-2,0)$ and $\beta\in
(0,1).$

Problem \eqref{31}--\eqref{32} becomes
\begin{eqnarray}
\nonumber
p(D)u&=&Ax+A\big[(1+u)^{n-1}-1-(n-1)u\big]\\ \nonumber &&-\big[(1+u)^{n-1}-1\big]q(D)u\\
\label{41} &&+\mu A^{-{2\nu \over 3}}x^\gamma(1+u)^{n+1}(D+\nu)(1+u),\; x\in (0,1]\\
u(0)&=&0. \label{41_bc}
\end{eqnarray}
We will study the corresponding linear problem
\begin{eqnarray}
\label{39}
p(D)u &= &f,\; x\in (0,1] \\ \label{39bc}
u(0) &=& 0.
\end{eqnarray}
For that purpose, we introduce a second and third variable
\[
y:=bx^\beta,\; z:=\mu x^{\gamma}
\]
for some $b\in \R,\; \mu>0$ to be fixed later.
Let us explain the reason for that.

One cannot expect the solution $u(x)$ of \eqref{41} to be smooth in the single variable $x$, since this, together with boundary condition \eqref{41_bc}, rules out all homogeneous solutions $x^{-1},$ $x^\alpha$, and $x^\beta$ to the corresponding linear problem \eqref{39}. Of these, the only one that is compatible with boundary condition~\eqref{39bc} is the solution $x^\beta$. Note, however, that $\frac{\d^k}{\d x^k} x^\beta$ is singular in $x=0$ for $k \ge 1$ and so there can only be one solution $u(x)$ to \eqref{39} that is smooth with respect to the single variable $x$ for smooth right-hand sides $f(x)$. Hence one introduces the artificial variable $y :=  b x^\beta$, being the only solution of \eqref{39} with $f \equiv 0$ that obeys \eqref{39bc}.

One cannot expect the solution $u(x)$ to be a smooth function in the two variables $x$ and $x^\beta$, since the right hand side of equation~\eqref{41} is, for $n \ne 2$, not smooth in the two variables $x$ and $y = b x^\beta$. This is why one introduces the artificial variable $z :=  \mu x^\gamma$.

If $v(x)$ and $\bar v(x,bx^\beta,\mu x^\gamma)$ are regular
functions related via $v(x)=\bar v(x,bx^\beta,\mu x^\gamma)$
we have {by \eqref{33}} $Dv(x)=\bar\bD\bar v(x,bx^\beta,\mu x^\gamma),$
where
\begin{equation}
\label{barD}
\bar{\bD}:=x\partial_x+\beta y \partial_y+\gamma z\partial_z.
\end{equation}
In order to unfold the singular behavior, we introduce  also
\[
u(x)=\bar u(x,bx^\beta, \mu x^\gamma).
\]
 {Using the identification between $D$ and $\bar{\bD}$,} the conditions $u(0)=0$ and $u(x)\sim -bx^\beta$ as $x \searrow 0$ combined with equation \eqref{39} translate to the linear problem
\begin{eqnarray}
\label{40bis}
p(\bar{\bD})\bar{u}&=&\bar{f},\;\mbox{ for } x>0,\; y>0,\; z>0,\\ \label{40tris}
\big(\bar{u},\partial_y\bar{u}\big)(0,0,0)&=&\big(0,-1\big).
\end{eqnarray}
In fact equation \eqref{41} reads in the new variables
\begin{eqnarray*}
p(\bar{\bD})\bar{u}&=&Ax+A\big[(1+\bar{u})^{n-1}-1-(n-1)\bar{u}\big]\\
  &&-\big[(1+\bar{u})^{n-1}-1\big]q(\bar{\bD})\bar{u}\\
 &&+A^{-{2 \over 3}\nu}z(1+\bar{u})^{n+1}(\bar{\bD}+\nu)(1+\bar{u}).
\end{eqnarray*}
Then the solution $\bar u(x,y,z)$ of \eqref{40bis}-\eqref{40tris}  coincides with that of \eqref{41}-\eqref{41_bc} in the case $y = \overline{b} x^\beta$, $z = \overline{\mu}x^\gamma$, for fixed values $(\overline{b}, \overline{\mu})$, chosen such that condition~\eqref{bdryselfts2} as well as condition~\eqref{bdryselfts3} are fulfilled. The freedom to choose two real parameters $b$ and $\mu$ will play a crucial role to fulfill two additional conditions.

%
\section{Well-posedness for the linear problem}
We introduce the notation $(x,y,z)=:(x_1,x_2,x_3)$, as well as $\partial_{x_i}:=\partial_i$ for $i=1,\, 2, \, 3$. Let us set
\[
\bar{u}=:\bar{u}_0-x_2.
\]
We will construct a solution to the linear problem with
homogeneous boundary condition:
\begin{eqnarray}
\label{41new}
p(\bar{\bD})\bar{u}_0&=&\bar{f},\;\mbox{ for } x_1>0,\; x_2>0,\; x_3>0,\\
\label{42}
\big(\bar{u}_0,\partial_2\bar{u}_0\big)(0,0,0)&=&\big(0,0\big).
\end{eqnarray}
A key tool in our construction will be the following lemma.
\begin{lemma}
\label{lem1}
Let $\Lambda\leq\beta$, and consider the problem
\begin{subequations}\label{Linear}
\begin{align}
\left(\bar{\bD}-\Lambda\right)\bar{u}&=\bar{f}, \label{Linear1}\\
\big(\bar{u},\partial_2\bar{u}\big)(0,0,0)&=\big(0,0\big).\label{Linear2}
\end{align}
\end{subequations}
Then, for all smooth function $\bar{f}(x_1,x_2,x_3)$ with $\big(\bar{f},\partial_2\bar{f}\big)(0,0,0)=\big(0,0\big)$, the function
\bq
\label{T}
\bar{u}(x_1,x_2,x_3)=\left(T_{\Lambda}\,\bar{f}\right)(x_1,x_2,x_3):=\int_0^1\,r^{-\Lambda}\,\bar{f}(r x_1,r^\beta x_2, r^\gamma x_3)\,\frac{dr}{r},
\eq
is a smooth solution of \eqref{Linear} such that
\bq
\label{Estim1}
\sum_{j=0}^1\vertii{\partial_1^k\partial_2^\ell\partial_3^m\bar{\bD}^j\bar{u}} \lesssim \vertii{\partial_1^k\partial_2^\ell\partial_3^m\bar{f}},\quad (k,\ell,m)\in\N_0^3,
\eq
with $(k,\ell,m)\not\in \{\,(0,0,0),\; (0,1,0)\}$ if $\Lambda=\beta$, where $\vertii{\cdot}$ denotes the sup-norm on an arbitrary cuboid $[0,\ell_1]\times [0,\ell_2]\times [0,\ell_3]$. Moreover, we have the commutation property
\[
T_\Lambda\bar{\bD}=\bar{\bD} T_\Lambda\,.
\]
\end{lemma}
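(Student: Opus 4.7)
The plan is to recognize $\bar{\bD} - \Lambda$ as a first-order differential operator along the flow generated by the vector field $\bar{\bD} = x_1\partial_1 + \beta x_2 \partial_2 + \gamma x_3 \partial_3$, whose backward characteristics are $r \mapsto (rx_1, r^\beta x_2, r^\gamma x_3)$ for $r \in (0,1]$. The operator $T_\Lambda$ is then the natural Duhamel-type right-inverse; in fact the substitution $r = e^{-s}$ converts it to the standard variation-of-constants formula $\bar u(x) = \int_0^\infty e^{\Lambda s}\bar f(e^{-s}x_1,e^{-\beta s}x_2, e^{-\gamma s}x_3)\, ds$. The hypotheses on $\bar f$ at the origin are exactly what is needed to guarantee the decay of the integrand at $s = \infty$.

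The verification that $(\bar{\bD} - \Lambda)T_\Lambda \bar f = \bar f$ rests on the chain-rule identity
\[
r\frac{d}{dr}\bar f(rx_1, r^\beta x_2, r^\gamma x_3) = (\bar{\bD}\bar f)(rx_1, r^\beta x_2, r^\gamma x_3),
\]
which collapses $(\bar{\bD} - \Lambda)T_\Lambda\bar f(x)$ into a total derivative $\int_0^1 \frac{d}{dr}\bigl[r^{-\Lambda}\bar f(rx_1, r^\beta x_2, r^\gamma x_3)\bigr]\, dr$. The upper endpoint gives $\bar f(x)$, and the lower endpoint vanishes because a Taylor expansion at the origin, combined with $\bar f(0,0,0) = 0$ and $\partial_2 \bar f(0,0,0) = 0$, yields $\bar f(rx_1, r^\beta x_2, r^\gamma x_3) = O(r) + O(r^{2\beta}) + O(r^\gamma)$, each of which beats $r^\Lambda$ as $r \to 0^+$ since $\Lambda \leq \beta < \min(1, 2\beta, \gamma)$. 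The boundary conditions $\bar u(0,0,0) = 0$ and $\partial_2\bar u(0,0,0) = 0$ follow by direct evaluation of the integral formula using $\bar f(0,0,0) = 0$ and, after differentiating under the integral, $\partial_2\bar f(0,0,0) = 0$.

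The estimate \eqref{Estim1} then comes from differentiating under the integral sign,
\[
\partial_1^k\partial_2^\ell\partial_3^m\bar u(x) = \int_0^1 r^{\,k + \beta\ell + \gamma m - \Lambda - 1}(\partial_1^k\partial_2^\ell\partial_3^m\bar f)(rx_1, r^\beta x_2, r^\gamma x_3)\, dr,
\]
which gives the sup-norm bound $\|\partial_1^k\partial_2^\ell\partial_3^m\bar u\| \leq (k+\beta\ell+\gamma m - \Lambda)^{-1}\|\partial_1^k\partial_2^\ell\partial_3^m\bar f\|$ whenever the exponent $k+\beta\ell+\gamma m - \Lambda$ is positive. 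A case check using $\Lambda \leq \beta$, $\beta \in (0,1)$, and $\gamma > 1$ shows that positivity holds for every $(k,\ell,m) \in \mathbb{N}_0^3$ except exactly the two triples $(0,0,0)$ and $(0,1,0)$ when $\Lambda = \beta$, matching the stated exclusion. The bound on $\bar{\bD}\bar u$ then follows from the equation $\bar{\bD}\bar u = \Lambda\bar u + \bar f$ together with the elementary commutator $[\partial_i, \bar{\bD}] = c_i\partial_i$ (with $c_1 = 1$, $c_2 = \beta$, $c_3 = \gamma$) to move $\bar{\bD}$ past the $\partial_1^k\partial_2^\ell\partial_3^m$. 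Finally, the commutation relation $T_\Lambda \bar{\bD} = \bar{\bD}T_\Lambda$ is immediate from the same chain-rule identity above.

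The main technical obstacle is the borderline case $\Lambda = \beta$: one must verify carefully that the only triples producing a divergent $r$-integral are precisely $(0,0,0)$ and $(0,1,0)$, and check that all other combinations yield a strictly positive exponent thanks to $\gamma > \beta$ and $2\beta > \beta$. The remaining check, namely that the integral defining $T_\Lambda\bar f$ still converges in these borderline indices (so that $\bar u$ itself is well-defined even where the naive sup-norm bound fails), requires invoking the cancellations $\bar f(0,0,0) = 0$ and $\partial_2\bar f(0,0,0) = 0$ exactly as in the Taylor argument above.
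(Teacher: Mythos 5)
Your proof is correct and follows essentially the same route as the paper: interpret $T_\Lambda$ as the Duhamel resolvent for the characteristic flow, verify $(\bar{\bD}-\Lambda)T_\Lambda\bar f = \bar f$ via the chain-rule/total-derivative identity with the boundary term at $r=0$ killed by the Taylor cancellations, differentiate under the integral to get $\partial_1^k\partial_2^\ell\partial_3^m T_\Lambda = T_{\Lambda-k-\ell\beta-m\gamma}\partial_1^k\partial_2^\ell\partial_3^m$, and deduce the $\bar{\bD}\bar u$ bound from the equation $\bar{\bD}\bar u = \Lambda\bar u + \bar f$. One small slip worth noting: the claim that positivity of $k+\beta\ell+\gamma m-\Lambda$ fails \emph{only} for $(0,0,0),(0,1,0)$ when $\Lambda=\beta$ is not quite right, since for $0\le\Lambda<\beta$ the triple $(0,0,0)$ already gives exponent $-\Lambda\le 0$; this imprecision is shared with the lemma's own statement and is harmless because the construction $T=T_\beta T_{-1}T_\alpha$ only invokes $\Lambda\in\{\alpha,-1,\beta\}$, i.e.\ $\Lambda<0$ or $\Lambda=\beta$.
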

\begin{proof}
Although the proof is similar to that in \cite[Lemma 1]{GGO}, we give it for completeness. Let us first show that formula \eqref{T} defines a smooth function satisfying \eqref{Linear2}. Writing
\[
\verti{\left(T_{\Lambda}\,\bar{f}\right)(x_1,x_2,x_3)}\leq \vertii{\bar{f}}\,\int_0^1\,\frac{dr}{r^{1+\Lambda}},
\]
we see that $\bar{u}$ is well defined for $\Lambda <0$. Assume now that $0\leq \Lambda\leq \beta$ and expanding
\[
\bar{f}(r x_1,r^\beta x_2, r^\gamma x_3)=r x_1\partial_1\bar{f}(0,0,0)+r^\gamma x_3\partial_3\bar{f}(0,0,0)+O_{(x_1,x_2,x_3)}(r^{2\beta}),
\]
we end up with
\beqn
\verti{\left(T_{\Lambda}\,\bar{f}\right)(x_1,x_2,x_3)}&\leq& \vertii{x_1\partial_1\bar{f}(0,0,0)}\int_0^1\,\frac{dr}{r^\Lambda}+
\vertii{x_3\partial_3\bar{f}(0,0,0)}\int_0^1\,\frac{dr}{r^{\Lambda-\gamma}}\\&+&
C(x_1,x_2,x_3)\,\int_0^1\,\frac{dr}{r^{\Lambda-2\beta}}<\infty.
\eeqn
The fact that $\partial_2\bar{f}(0,0,0)=0$ implies
$$
\partial_2\bar{f}(r x_1,r^\beta x_2, r^\gamma x_3)=O\,(r x_1+r^\beta x_2+r^\gamma x_3)\,.
$$
It follows that
\begin{equation}
\label{partialtwo}
\partial_2\bar{u}(x_1,x_2,x_3)=
\int_0^1\,r^{-\Lambda+\beta}\;\partial_2\bar{f}(r x_1,r^\beta x_2, r^\gamma x_3)\frac{dr}{r},
\end{equation}
is well defined. The boundary conditions \eqref{Linear2} follows from \eqref{T} and \eqref{partialtwo}.
To prove the smoothness, observe that
\[
\partial_1^k\partial_2^\ell\partial_3^m\bar{u}(x_1,x_2,x_3)=
\int_0^1\,r^{-\Lambda+k+\ell\beta+m\gamma}\;\partial_1^k\partial_2^\ell\partial_3^m\bar{f}(r x_1,r^\beta x_2, r^\gamma x_3)\frac{dr}{r}\,.
\]
Since $\Lambda\leq\beta$ and the cases $(k,\ell,m)\in \{\,(0,0,0),\; (0,1,0)\}$ was handled, we conclude that the integral converges and the derivatives up to any order are well defined.

Recalling that
\[
\bar{\bD}=x_1\partial_1+\beta x_2\partial_2+\gamma x_3\partial_3,
\]
we compute
\beq
\label{LinEq}
\nonumber
\bar{\bD}\bar{u}&=&\int_0^1\,r^{-\Lam}\Big[r x_1\partial_1\bar{f} +\beta r^\beta x_2\partial_2\bar{f}+\gamma r^\gamma x_3\partial_3\bar{f}\Big](r x_1,r^\beta x_2, r^\gamma x_3)\frac{dr}{r}\\
&=&\int_0^1\,r^{-\Lam}\frac{d}{dr}\Big[\bar{f}(r x_1,r^\beta x_2, r^\gamma x_3)\Big]\,dr\\
\nonumber
&=&\Big[r^{-\Lam}\bar{f}(r x_1,r^\beta x_2, r^\gamma x_3)\Big]_0^1+\Lam\int_0^1\,\,r^{-\Lam}\,\bar{f}(r x_1,r^\beta x_2, r^\gamma x_3)\,\frac{dr}{r}\\
\nonumber
&=&\bar{f}(x_1,x_2,x_3)+\Lam \bar{u}(x_1,x_2,x_3),
\eeq
where we have used the fact that $\bar{f}(0,0,0)=\partial_2\bar{f}(0,0,0)=0$ to deduce
\[
\left. r^{-\Lam}\bar{f}(r x_1,r^\beta x_2, r^\gamma x_3)\right|_{r=0}=0\,.
\]
We have by definition of $T_\Lam$:
\[
T_\Lam\bar{\bD}\bar{f}=\int_0^1\,r^{-\Lam}\Big[r x_1\partial_1\bar{f} +\beta r^\beta x_2\partial_2\bar{f}+\gamma r^\gamma x_3\partial_3\bar{f}\Big](r x_1,r^\beta x_2, r^\gamma x_3)\frac{dr}{r},
\]
which is equal to $\bar{\bD}\bar{u}=\bar{\bD}T_\Lam\bar{f}$ thanks to \eqref{LinEq}. Finally, \eqref{Linear2} is obvious and estimate \eqref{Estim1} follows from the equation
\[
\bar{\bD}\bar{u}=\Lam\bar{u}+\bar{f},
\]
and the fact that
\[
\partial_1^k\partial_2^\ell\partial_3^m\bar{u}=
T_{\Lam-k-\ell\beta-m\gamma}\,\partial_1^k\partial_2^\ell\partial_3^m\bar{f}\,.
\]
\end{proof}
A straightforward consequence of Lemma~\ref{lem1} is:
\begin{proposition}
\label{p1} There exists a linear operator $T$ such that for all smooth functions $\bar{f}(x_1,x_2,x_3)$ with $(\bar{f},\partial_2\bar{f})(0,0,0)=(0,0),$ the function $\bar{u}(x_1,x_2,x_3):=(T\bar{f})(x_1,x_2,x_3)$ is the unique smooth solution of  \eqref{41new}--\eqref{42}. Furthermore, $\bar{u}(x_1,x_2,x_3)$ satisfies the estimates
\begin{equation}
\label{40}
\sum_{j=0}^3\vertii{\partial_1^k\partial_2^l\partial_3^m \bar{\bD}^j
\bar{u}} \lesssim \vertii{\partial_1^k\partial_2^l\partial_3^m
\bar{f}} ,\; \forall\;(k,l,m)\in \N_0^3\setminus \{(0,0,0),\;
(0,1,0)\}.
\end{equation}
\end{proposition}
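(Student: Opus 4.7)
I propose to build $T$ as the composition of three of the scalar operators $T_\Lambda$ from Lemma~\ref{lem1}, one for each root of $p$. The factorization $p(\xi)=(\xi+1)(\xi-\alpha)(\xi-\beta)$ from~\eqref{38}, together with $-1<0$, $\alpha\in(-2,0)$, and $\beta\in(0,1)$, shows that each root $\Lambda\in\{-1,\alpha,\beta\}$ satisfies the hypothesis $\Lambda\leq\beta$ of Lemma~\ref{lem1}.

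Concretely, I would set
\[
T \;:=\; T_{-1}\circ T_{\alpha}\circ T_{\beta}.
\]
The boundary condition~\eqref{Linear2} ensures that each $T_\Lambda$ maps the admissible class $\{\bar g\text{ smooth}:(\bar g,\partial_2\bar g)(0,0,0)=(0,0)\}$ into itself, so the composition is well-defined. The commutation $\bar{\bD}\,T_\Lambda=T_\Lambda\,\bar{\bD}$ (which entails $(\bar{\bD}-\Lambda')T_\Lambda=T_\Lambda(\bar{\bD}-\Lambda')$) together with the right-inverse identity $(\bar{\bD}-\Lambda)T_\Lambda=I$ established at the end of the computation in the proof of Lemma~\ref{lem1}, then yield by telescoping
\[
p(\bar{\bD})\,T\bar{f} \;=\; (\bar{\bD}-\beta)(\bar{\bD}-\alpha)(\bar{\bD}+1)\,T_{-1}T_\alpha T_\beta\,\bar{f} \;=\; \bar{f},
\]
and the boundary conditions~\eqref{42} are inherited from~\eqref{Linear2} applied at the outermost layer.

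For the estimate~\eqref{40}, I iterate Lemma~\ref{lem1}. Each $T_\Lambda$ controls $\bar{\bD}^j$ for $j\in\{0,1\}$ in terms of the input; three applications therefore yield control of $\bar{\bD}^j T\bar{f}$ for $j\in\{0,1,2,3\}$ once the recursive identities
\[
\bar{\bD}\,T_\beta\bar{f}=\beta T_\beta\bar{f}+\bar{f},\qquad \bar{\bD}\,T_\alpha T_\beta\bar{f}=\alpha T_\alpha T_\beta\bar{f}+T_\beta\bar{f},\qquad \bar{\bD}\,T\bar{f}=-T\bar{f}+T_\alpha T_\beta\bar{f}
\]
are used to reduce the higher $\bar{\bD}$-powers to combinations of lower ones applied to intermediate functions. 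Mixed Cartesian derivatives are handled by the commutator $\partial_1^k\partial_2^\ell\partial_3^m\,T_\Lambda=T_{\Lambda-k-\ell\beta-m\gamma}\,\partial_1^k\partial_2^\ell\partial_3^m$ derived in the proof of Lemma~\ref{lem1}, which reduces the mixed-derivative estimate to the base case. The excluded indices $(0,0,0)$ and $(0,1,0)$ arise only from the $T_\beta$ layer, matching the exclusion in Lemma~\ref{lem1}.

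Uniqueness follows by a formal power-series argument. If $p(\bar{\bD})\bar{v}=0$ with $\bar{v}$ smooth and $(\bar{v},\partial_2\bar{v})(0,0,0)=(0,0)$, expanding $\bar v=\sum c_{k\ell m}\,x_1^k x_2^\ell x_3^m$ diagonalizes $\bar{\bD}$ with eigenvalue $k+\ell\beta+m\gamma\geq 0$. Among the roots $\{-1,\alpha,\beta\}$ of $p$ only $\beta$ is nonnegative, and since $\gamma=2(1+\nu)>1$ and $\beta<1$ the only admissible monomial is $x_2$; the condition $\partial_2\bar{v}(0,0,0)=0$ eliminates even that term. I expect the main technical obstacle to be the bookkeeping in the estimate step: tracking how the exclusion set $\{(0,0,0),(0,1,0)\}$ propagates through the tower, and verifying that the condition $(\bar g,\partial_2\bar g)(0,0,0)=(0,0)$ is preserved at every intermediate stage so that the next $T_\Lambda$ is applicable, which is precisely the content of~\eqref{Linear2}.
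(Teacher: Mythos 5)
Your construction of $T$ as a composition of the scalar operators $T_\Lambda$ over the three roots of $p$ is the same idea as the paper's, which sets $T := T_\beta\,T_{-1}\,T_\alpha$; your ordering $T_{-1}\circ T_\alpha\circ T_\beta$ is equally valid since the $T_\Lambda$ commute and each maps the admissible class into itself, and your telescoping and iterated-estimate arguments for~\eqref{40} are correct. The one place where you depart from the paper — and where there is a genuine gap — is the uniqueness step.

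Your formal power-series argument only shows that the \emph{Taylor series} of a smooth solution $\bar{v}$ of the homogeneous problem vanishes at the origin. For a merely smooth (not analytic) function this does not by itself imply $\bar v\equiv 0$; a flat function could in principle survive. To close the gap you would need an extra argument, e.g.\ first showing that any smooth solution of $p(\bar\bD)\bar v=0$ that is bounded near the origin must be anisotropically homogeneous of degree $\beta$ (since the modes of degree $-1$ and $\alpha$ blow up), and then that a smooth, degree-$\beta$ homogeneous function with vanishing Taylor series is zero by comparing the scaling $\bar v(\lambda x_1,\lambda^\beta x_2,\lambda^\gamma x_3)=\lambda^\beta\bar v$ with the $O(\lambda^{\beta N})$ decay coming from flatness. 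The paper avoids all of this: it notes that the estimate~\eqref{40} with $\bar f\equiv 0$ gives $\partial_1\bar u=\partial_2^2\bar u=\partial_3\bar u=0$, and then Lemma~\ref{L31}(a) (or simply that $\bar u$ is affine in $x_2$ and constant in $x_1,x_3$, combined with the boundary conditions~\eqref{42}) forces $\bar u\equiv 0$ directly. You should replace your uniqueness paragraph with this shorter and complete argument; the rest of your proof is sound.
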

\begin{proof}
As in \cite{GGO}, we set
\[
T:=T_\beta\,T_{-1}\,T_{\alpha}\,.
\]
Hence, $\bar{u}:=T\bar{f}$ is well defined, smooth and satisfies the problem {\eqref{41new}-\eqref{42}}. The estimate \eqref{40} follows from Lemma \ref{lem1}. The uniqueness follows from \eqref{40} and Part a) of Lemma \ref{L31} below.
\end{proof}

\section{Local existence}
The unfolded function $\bar{u}(x, bx^\beta, \mu x^\gamma)$ (with $u(x) = \bar u(x,\overline{b} x^\beta, \overline{\mu} x^\gamma)$) shall satisfy the following boundary value problem
\begin{subequations}\label{Nonlinear}
\begin{align}
p(\bar{\bD})\bar{u}&=\bar{f}_{\bar{u}},\quad \mbox{for}\;\; x_1,x_2,x_3>0, \label{Nonlinear1}\\
\big(\bar{u},\partial_2\bar{u}\big)(0,0,0)&=\big(0,-1\big),\label{Nonlinear2}
\end{align}
\end{subequations}
where
\beq
\nonumber
\indent\indent \indent\indent \indent\indent \bar{f}_{\bar{u}}&=&A x_1-\left((1+\bar{u})^{n-1}-1\right)q(\bar{\bD})\bar{u}\\&&\label{source} +
A\left[(1+\bar{u})^{n-1}-1-(n-1)\bar{u}\right]\\ \nonumber &&+A^{-\frac{2}{3}\nu}x_3(1+\bar{u})^{n+1}\left(\bar{\bD}+\nu\right)(1+\bar{u})\,.
\eeq
The main result of this section is the following.
\begin{proposition}
\label{p2}
There exist $\eps\in(0,1)$ and $\bar{u}(x_1,x_2,x_3)$ analytic in
$[0,\eps^2]\times[0,\eps]\times[0,\eps^2]:=Q_\eps$ such that $\bar{u}$ solves \eqref{Nonlinear} in $Q_\eps$.
\end{proposition}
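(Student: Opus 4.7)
The strategy is to reduce Proposition~\ref{p2} to a contraction mapping problem in a suitable Banach algebra of analytic functions. First I would absorb the inhomogeneous boundary datum $\partial_2\bar u(0,0,0)=-1$ by writing $\bar u = -x_2 + \bar u_0$, so that $\bar u_0$ satisfies $(\bar u_0,\partial_2\bar u_0)(0,0,0)=(0,0)$. Substituting into \eqref{Nonlinear1} gives
\begin{equation*}
p(\bar{\bD})\bar u_0 = \bar g(\bar u_0) := \bar f_{-x_2+\bar u_0} - p(\bar{\bD})(-x_2) = \bar f_{-x_2+\bar u_0} + p(\beta)\,x_2,
\end{equation*}
since $\bar{\bD} x_2 = \beta x_2$. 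Before applying the inverse $T$ furnished by Proposition~\ref{p1}, one has to verify the compatibility conditions $(\bar g(\bar u_0),\partial_2 \bar g(\bar u_0))(0,0,0)=(0,0)$. This is where the definition of $\beta$ as a root of $p$ is used: the added term $p(\beta)x_2$ vanishes identically, and the computation of $\partial_2\bar f_{-x_2+\bar u_0}(0,0,0)$ cancels thanks to $\bar u(0,0,0)=0$, the factor $x_3$ in the gravity term, and the fact that every monomial $\bar{\bD}^k$ applied to a function vanishing at the origin produces zero at the origin when evaluated with $k\ge 1$.

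Once these compatibilities are secured, I formulate \eqref{Nonlinear} as the fixed point equation
\begin{equation*}
\bar u_0 = \mathcal{N}(\bar u_0) := T\bar g(\bar u_0)
\end{equation*}
on a cuboid $Q_\eps=[0,\eps^2]\times[0,\eps]\times[0,\eps^2]$. The natural space is the Banach algebra
\begin{equation*}
X_\eps := \Bigl\{\, \bar v \text{ analytic on } Q_\eps\, :\, \|\bar v\|_\eps := \sum_{k,\ell,m\ge 0} \frac{\eps^{2k+\ell+2m}}{k!\,\ell!\,m!}\,\sup_{Q_\eps}|\partial_1^k\partial_2^\ell\partial_3^m \bar v| <\infty \Bigr\},
\end{equation*}
chosen so that the scaling weights match the three scales $x_1\sim\eps^2$, $x_2\sim\eps$, $x_3\sim\eps^2$ of $Q_\eps$. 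In this norm the multiplication is continuous, composition with entire or real-analytic functions such as $v\mapsto (1+v)^{n-1}-1-(n-1)v$ and $v\mapsto(1+v)^{n+1}$ is Lipschitz on bounded sets, and each first-order differential operator $\partial_i$ or $\bar{\bD}$ acts boundedly. The operator $T=T_\beta T_{-1}T_\alpha$ is, by Lemma~\ref{lem1} and Proposition~\ref{p1} applied coefficient-by-coefficient in the Taylor expansion, bounded on $X_\eps$ and in fact contributes a gain of $\eps$ to a factor by which the image has extra smoothness, since the estimate \eqref{40} from Proposition~\ref{p1} says every derivative of $T\bar f$ is controlled by the same derivative of $\bar f$.

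With these ingredients in hand, I would verify (i) $\mathcal{N}$ maps the ball $B_R:=\{\|\bar u_0\|_\eps\le R\}$ into itself and (ii) $\mathcal{N}$ is a contraction there, for $R$ and $\eps$ small enough. Concretely, the nonlinear contributions to $\bar g$ are all at least quadratic in $\bar u_0$ or carry an explicit factor $x_1$ or $x_3$, both of which have $\|\cdot\|_\eps$-norm $O(\eps^2)$. Combined with the boundedness of $T$, this yields $\|\mathcal{N}(\bar u_0)\|_\eps\lesssim \eps^2 + R^2$ and $\|\mathcal{N}(\bar u_0)-\mathcal{N}(\bar v_0)\|_\eps\lesssim (\eps^2 + R)\|\bar u_0 - \bar v_0\|_\eps$, so choosing first $R\approx \eps^2$ and then $\eps\ll 1$ closes the contraction. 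Banach's fixed point theorem then delivers a unique $\bar u_0\in X_\eps$ solving the fixed point equation, and $\bar u=-x_2+\bar u_0$ is the required analytic solution of \eqref{Nonlinear} on $Q_\eps$.

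The main obstacle will be a careful verification that the nonlinear map $\mathcal{N}$ is well defined on $X_\eps$, i.e.\ that $\bar g(\bar u_0)$ keeps satisfying the compatibility $(\bar g,\partial_2\bar g)(0,0,0)=(0,0)$ required for Proposition~\ref{p1}, and that the composition $\bar u_0\mapsto (1+\bar u_0-x_2)^{n-1}$ with the non-integer exponent $n-1$ remains analytic and Lipschitz in $\|\cdot\|_\eps$ on $B_R$; this forces $R$ to be smaller than $1$ and requires a bookkeeping of Taylor coefficients of $(1+s)^{n-1}$ against the $X_\eps$-norm. Everything else — the algebra property, the boundedness of $T$, the smallness of $x_1,x_3$ and the quadratic nature of the remaining nonlinearities — is routine once the space is set up correctly.
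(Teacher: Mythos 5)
Your overall strategy --- the decomposition $\bar u=-x_2+\bar u_0$, the observation that $p(\beta)=0$, inversion by the operator $T$ of Proposition~\ref{p1}, and a contraction argument in a weighted Taylor-coefficient norm --- is the same as the paper's. However, there is a genuine gap in your functional setup: the claim that ``each first-order differential operator $\partial_i$ or $\bar{\bD}$ acts boundedly'' on $X_\eps$ is false. On a space of analytic functions over a \emph{fixed} cuboid, differentiation is unbounded in such a norm; concretely, comparing the weight $\eps^{2k}/k!$ attached to $\vertii{\partial_1^k(\bar{\bD}\bar v)}$ with the weight $\eps^{2(k+1)}/(k+1)!$ attached to $\vertii{\partial_1^{k+1}\bar v}$ in $\|\bar v\|_\eps$ leaves an uncontrolled factor of order $k+1$. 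This is not a cosmetic issue: the right-hand side $\bar f_{\bar u}$ contains $\big((1+\bar u)^{n-1}-1\big)q(\bar{\bD})\bar u$ and $x_3(1+\bar u)^{n+1}(\bar{\bD}+\nu)(1+\bar u)$, i.e.\ up to three powers of $\bar{\bD}$ applied to the unknown, so with your norm the map $\bar u_0\mapsto \bar g(\bar u_0)$ is not Lipschitz from $X_\eps$ to $X_\eps$ and neither the self-map nor the contraction estimate closes.

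The paper resolves precisely this point by using a pair of norms: the unknown is measured in $|\cdot{\tilde|}_1$, which incorporates $\sum_{j=0}^{3}\vertii{\partial_1^k\partial_2^\ell\partial_3^m\bar{\bD}^{j}\bar u}$, while the right-hand side is measured in $|\cdot{\tilde|}_0$; the gain of three $\bar{\bD}$-derivatives is exactly what estimate \eqref{40} of Proposition~\ref{p1} provides (this, rather than ``every derivative of $T\bar f$ is controlled by the same derivative of $\bar f$'', is its real content), and Lemma~\ref{L32} packages it as $|T\bar f{\tilde|}_1\lesssim|\bar f{\tilde|}_0$. A secondary difference is that you iterate directly in the space of analytic functions with the full infinite sum of derivatives, whereas the paper works in the nested spaces ${\mathbf S}_{K,L,M}$ with truncated norms and deduces analyticity afterwards from the uniformity in $K,L,M$ of the fixed point and the convergence of its Taylor remainders. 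Your variant could be made to work, but only after repairing the norm as above and then verifying that $T$ and the fractional-power compositions preserve the corrected infinite-sum space.
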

The proof uses a fixed-point argument. In order to establish the contraction property, we need the following lemma:
\begin{lemma}
\label{L31}
 Let $\bar{f}(x_1,x_2,x_3)$, $\bar{g}(x_1,x_2,x_3)$ be smooth. Then we have
\begin{itemize}
\item[a)] if $\big(\bar{f},\partial_2\bar{f}\big)(0,0,0)=\big(0,0\big)$, then
\[
\vertii{\bar{f}}+\eps\vertii{\partial_2\bar{f}}\lesssim \eps^2\left(\vertii{\partial_1\bar{f}}+\vertii{\partial^2_2\bar{f}}+\vertii{\partial_3\bar{f}}\right).
\]
\item[b)] $\verti{\bar{f}\bar{g}}_0\leq \verti{\bar{f}}_0\verti{\bar{g}}_0$, where, for $K, L, M\in\N$,
\[
\verti{\bar{h}}_0=\sum_{k=0}^K\sum_{\ell=0}^L\sum_{m=0}^M\frac{\eps^{2k+\ell+2m}}{k!\ell !m!}\vertii{\partial_1^k\partial_2^\ell\partial_3^m\bar{h}},
\]
where $\vertii{\cdot}$ denotes the sup-norm on $Q_\varepsilon$.
\end{itemize}
\end{lemma}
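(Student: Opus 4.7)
\textbf{Proof plan for Lemma~\ref{L31}.} The two parts can be treated separately; part~(a) is the substantive step and (b) is an algebraic Cauchy-product bound.

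For part~(a), I would use a two-step bootstrap exploiting the two vanishing conditions $\bar f(0,0,0) = \partial_2 \bar f(0,0,0) = 0$ together with the anisotropy of $Q_\eps = [0,\eps^2] \times [0,\eps] \times [0,\eps^2]$. In the first step I would split
\[
\bar f(x_1, x_2, x_3) = \bar f(0, x_2, 0) + \bigl[\bar f(x_1, x_2, x_3) - \bar f(0, x_2, 0)\bigr].
\]
A second-order Taylor expansion in $x_2$ along $x_1 = x_3 = 0$, using both vanishing conditions, gives $|\bar f(0, x_2, 0)| \leq \tfrac{x_2^2}{2} \|\partial_2^2 \bar f\|$. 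For the bracket I would use the path integral
\[
\bar f(x_1, x_2, x_3) - \bar f(0, x_2, 0) = x_1 \int_0^1 \partial_1 \bar f(tx_1, x_2, tx_3)\,dt + x_3 \int_0^1 \partial_3 \bar f(tx_1, x_2, tx_3)\,dt,
\]
bounded by $x_1 \|\partial_1 \bar f\| + x_3 \|\partial_3 \bar f\|$. Since $x_1, x_3 \leq \eps^2$ and $x_2 \leq \eps$, this already yields $\|\bar f\| \lesssim \eps^2 (\|\partial_1 \bar f\| + \|\partial_2^2 \bar f\| + \|\partial_3 \bar f\|)$.

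The second step is the main subtlety: a direct line integral for $\partial_2 \bar f$ would introduce the mixed derivatives $\partial_1 \partial_2 \bar f$ and $\partial_2 \partial_3 \bar f$, which are absent from the right-hand side. Instead I would fix $(x_1, x_3)$, set $\psi(x_2) := \bar f(x_1, x_2, x_3)$, and use the endpoint Taylor identity
\[
\psi(\eps) = \psi(0) + \eps\, \psi'(0) + \int_0^\eps (\eps - s)\,\psi''(s)\,ds,
\]
solved for $\psi'(0) = \partial_2 \bar f(x_1, 0, x_3)$, to get $|\partial_2 \bar f(x_1, 0, x_3)| \leq \tfrac{2}{\eps}\|\bar f\| + \tfrac{\eps}{2} \|\partial_2^2 \bar f\|$. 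Substituting the bound on $\|\bar f\|$ from the first step converts the $\eps^{-1}$ factor into an $\eps$ factor, so that $|\partial_2 \bar f(x_1, 0, x_3)| \lesssim \eps(\|\partial_1 \bar f\| + \|\partial_2^2 \bar f\| + \|\partial_3 \bar f\|)$. For arbitrary $x_2 \in [0, \eps]$, the identity $\partial_2 \bar f(x_1, x_2, x_3) = \partial_2 \bar f(x_1, 0, x_3) + \int_0^{x_2} \partial_2^2 \bar f(x_1, s, x_3)\,ds$ adds at most $\eps \|\partial_2^2 \bar f\|$; multiplying through by $\eps$ produces the desired bound on $\eps \|\partial_2 \bar f\|$.

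For part~(b), I would apply Leibniz's rule to $\partial_1^k \partial_2^\ell \partial_3^m (\bar f \bar g)$ and use
\[
\frac{1}{k!\,\ell!\,m!}\binom{k}{k'}\binom{\ell}{\ell'}\binom{m}{m'} = \frac{1}{k'!(k-k')!\,\ell'!(\ell-\ell')!\,m'!(m-m')!}.
\]
Reindexing with $k'' = k - k'$, $\ell'' = \ell - \ell'$, $m'' = m - m'$ rewrites $|\bar f \bar g|_0$ as a double sum over $(k', \ell', m'; k'', \ell'', m'')$ subject to $k' + k'' \leq K$, $\ell' + \ell'' \leq L$, $m' + m'' \leq M$, while $|\bar f|_0 \cdot |\bar g|_0$ is the same sum under the weaker constraints $k', k'' \leq K$, etc. All summands being non-negative, this proves $|\bar f \bar g|_0 \leq |\bar f|_0 |\bar g|_0$. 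The main obstacle throughout is the bootstrap in step two of part~(a): recognising that a direct path integral for $\partial_2 \bar f$ is inadequate, and that one must instead extract $\partial_2 \bar f(x_1, 0, x_3)$ from $\bar f$ itself via an endpoint Taylor identity, trading an $\eps^{-1}$ factor against the $\eps^{+2}$ gain from step one.
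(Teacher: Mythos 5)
Your proposal is correct and follows essentially the same route as the paper: for part (a) the paper likewise combines a Taylor/path-integral representation of $\bar f$ (giving the $\eps^2$ gain from $x_1,x_3\leq\eps^2$ and $x_2^2\leq\eps^2$) with an endpoint identity expressing $\eps\,\partial_2\bar f$ through values of $\bar f$ at $x_2=0,\eps$ and weighted integrals of $\partial_2^2\bar f$, thereby avoiding the mixed derivatives exactly as you describe; for part (b) it uses the same Leibniz--Cauchy-product argument. The only cosmetic difference is that the paper writes the second-step identity directly at a general $x_2$ rather than anchoring at $x_2=0$ and transporting.
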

\begin{proof}
Part a) of the lemma follows immediately from the following representations:
\beqn
\bar{f}(x_1,x_2,x_3)&=&\int_0^{x_2}\int_0^s\partial^2_2\bar{f}(0,\tau,0)d\tau ds
+\int_0^{x_1}\partial_1\bar{f}(s,x_2,0)ds\\&&+\int_0^{x_3}\partial_3\bar{f}(x_1,x_2,s)ds,
\eeqn
and
\beqn
\eps\partial_2\bar{f}(x_1,x_2,x_3)&=&\bar{f}(x_1,\eps,x_3)-\bar{f}(x_1,0,x_3)+
\int_0^{x_2}\tau\partial^2_2\bar{f}(x_1,\tau,x_3)d\tau\\&&-\int_{x_2}^{\eps}(\eps-\tau)
\partial^2_2\bar{f}(x_1,\tau,x_3)d\tau.
\eeqn
We now turn to the proof of Part b). By {Leibniz' rule}, we have
\[
\partial_1^k\partial_2^\ell\partial_3^m\left(\bar{f}\bar{g}\right)=\sum_{k'=0}^k\sum_{\ell'=0}^\ell\sum_{m'=0}^m
\frac{k!\ell!m![\partial_1^{k'}\partial_2^{\ell'}\partial_3^{m'}\bar{f}]
[\partial_1^{k-k'}\partial_2^{\ell-\ell'}\partial_3^{m-m'}\bar{g}]}{(k-k')!(\ell-\ell')!(m-m')!k'!\ell'!m'!}.
\]
Using the fact that $\vertii{u v}\leq \vertii{u}\vertii{v}$, we deduce
\[
\verti{\bar{f}\bar{g}}_0\leq \sum_{k=0}^K\sum_{\ell=0}^L\sum_{m=0}^M \sum_{k'=0}^K\sum_{\ell'=0}^L\sum_{m'=0}^M
a_{k',\ell',m'}\,b_{k-k',\ell-\ell',m-m'},
\]
where
\beqn
a_{k',\ell',m'}&=&\frac{\eps^{2k'+\ell'+2m'}}{k'!\ell' !m'!}\vertii{\partial_1^{k'}\partial_2^{\ell'}\partial_3^{m'}\bar{f}},\\\\
b_{k-k',\ell-\ell',m-m'}&=&\frac{\eps^{2(k-k')+(\ell-\ell')+2(m-m')}}{(k-k')!(\ell-\ell') !(m-m')!}\vertii{\partial_1^{k-k'}\partial_2^{\ell-\ell'}\partial_3^{m-m'}\bar{g}}.
\eeqn
Hence
\[
\verti{\bar{f}\bar{g}}_0\leq \left(\sum_{k=0}^K\sum_{\ell=0}^L\sum_{m=0}^M a_{k,\ell,m}\right)\left(\sum_{k=0}^K\sum_{\ell=0}^L\sum_{m=0}^M b_{k,\ell,m}\right).
\]
This concludes the proof of the lemma.
\end{proof}
We will need the following result for the fixed-point argument:
\begin{lemma}
\label{L32}
 Let $\bar{f}$ be  a smooth function satisfying  $\big(\bar{f},\partial_2\bar{f}\big)(0,0,0)=\big(0,0\big).$ Let $\bar{u}=T\bar{f}$ be the solution of
\begin{eqnarray*}
p(\bar{\bD})\bar{u}&=\bar{f}, \\
\big(\bar{u},\partial_2\bar{u}\big)(0,0,0)&=\big(0,0\big),
\end{eqnarray*}
given by Proposition \ref{p1}. Then we have
\[
\verti{\bar{u}}_1=\verti{T\bar{f}}_1\lesssim \verti{\bar{f}}_0,
\]
where {$|\cdot|_0$ is as in Lemma \ref{L31} and} $|\cdot|_1$ is defined by
$$
|\bar{h}|_1=
\sum_{j=0}^3\,|\bar{\bD}^j\,\bar{h}|_0,
$$
{and both of them are restricted to $Q_\eps$.}
\end{lemma}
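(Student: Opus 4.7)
The plan is to reduce the estimate to the sup-norm bound of Proposition \ref{p1} multi-index by multi-index. For each triple $(k,\ell,m)$ appearing in the sum defining $|\cdot|_1$, and each $j\in\{0,1,2,3\}$, Proposition \ref{p1} yields
\[
\vertii{\partial_1^k\partial_2^\ell\partial_3^m\bar{\bD}^j\bar{u}}\lesssim \vertii{\partial_1^k\partial_2^\ell\partial_3^m\bar{f}},
\]
as long as $(k,\ell,m)\notin\{(0,0,0),(0,1,0)\}$. Multiplying by the weight $\eps^{2k+\ell+2m}/(k!\ell!m!)$ and summing over all admissible indices collects exactly the corresponding terms in $\verti{\bar{f}}_0$. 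Hence all contributions with $(k,\ell,m)\notin\{(0,0,0),(0,1,0)\}$ are controlled by $\verti{\bar{f}}_0$ up to an absolute constant.

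The two exceptional multi-indices must be handled separately, since Proposition \ref{p1} does not give estimates for $\vertii{\bar{u}}$ or $\vertii{\partial_2\bar{u}}$ directly. Here I would invoke Part a) of Lemma \ref{L31} applied to $\bar{u}$ itself: because $T\bar{f}$ satisfies $(\bar{u},\partial_2\bar{u})(0,0,0)=(0,0)$ by Proposition \ref{p1}, the hypothesis of Lemma \ref{L31} a) is met, giving
\[
\vertii{\bar{u}}+\eps\vertii{\partial_2\bar{u}}\lesssim \eps^2\Big(\vertii{\partial_1\bar{u}}+\vertii{\partial_2^2\bar{u}}+\vertii{\partial_3\bar{u}}\Big).
\]
Each derivative of $\bar{u}$ on the right corresponds to one of the admissible multi-indices $(1,0,0),(0,2,0),(0,0,1)$, and is therefore controlled via Proposition \ref{p1} by the corresponding derivative of $\bar{f}$. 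The prefactor $\eps^2$ is precisely the weight attached to each of these three terms in $\verti{\bar{f}}_0$, so these contributions are again absorbed into a constant multiple of $\verti{\bar{f}}_0$.

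Combining the two steps above and using the triangle inequality yields $\verti{T\bar{f}}_1\lesssim \verti{\bar{f}}_0$. The main point requiring care, rather than a genuine obstacle, is the bookkeeping of the $\eps$-weights and the factorial denominators: one must verify that the weight $\eps^{2k+\ell+2m}$ assigned to $\partial_1^k\partial_2^\ell\partial_3^m$ in the definition of $\verti{\cdot}_0$ is compatible, via Lemma \ref{L31} a), with the weights attached to the $(0,0,0)$ and $(0,1,0)$ slots in $\verti{\cdot}_1$, so that the exceptional terms are traded for admissible ones at the acceptable cost of a single $\eps^2$ factor. Since the commutation $T_\Lambda\bar{\bD}=\bar{\bD}T_\Lambda$ of Lemma \ref{lem1} already allows us to move any $\bar{\bD}^j$ past the solution operator, no further analytic work beyond Proposition \ref{p1} and Lemma \ref{L31} should be needed.
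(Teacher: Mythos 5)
Your overall strategy — split into admissible and exceptional multi-indices, handle the admissible ones with Proposition \ref{p1}, and trade the exceptional ones for admissible ones via Lemma \ref{L31}\,a) at the cost of $\eps^2$ — is exactly the paper's. But there is a concrete gap in the second step. The norm $\verti{\cdot}_1$ involves $\bar{\bD}^j$ for $j=0,1,2,3$ at every multi-index, so the terms that Proposition \ref{p1} does not reach are $\vertii{\bar{\bD}^j\bar{u}}$ and $\vertii{\partial_2\bar{\bD}^j\bar{u}}$ for \emph{all} $j\in\{0,1,2,3\}$, not only $\vertii{\bar{u}}$ and $\vertii{\partial_2\bar{u}}$. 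You only applied Lemma \ref{L31}\,a) to $\bar{u}$ itself, which covers the $j=0$ case and leaves the $j=1,2,3$ exceptional terms uncontrolled.

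The fix is what the paper does: apply Lemma \ref{L31}\,a) to $\bar{\bD}^j\bar{u}$ for each $j\in\{0,1,2,3\}$, not just to $\bar{u}$. To do so one must first verify that $\big(\bar{\bD}^j\bar{u},\partial_2\bar{\bD}^j\bar{u}\big)(0,0,0)=(0,0)$. This follows because $\bar{\bD}=x_1\partial_1+\beta x_2\partial_2+\gamma x_3\partial_3$ annihilates any smooth function at the origin, and $\partial_2\bar{\bD}\bar{u}(0,0,0)=\beta\,\partial_2\bar{u}(0,0,0)=0$; an induction then handles $j\ge 2$. With that in hand, Lemma \ref{L31}\,a) gives
\begin{equation*}
\sum_{j=0}^{3}\Big(\vertii{\bar{\bD}^j\bar{u}}+\eps\vertii{\partial_2\bar{\bD}^j\bar{u}}\Big)\lesssim \eps^2\sum_{j=0}^{3}\Big(\vertii{\partial_1\bar{\bD}^j\bar{u}}+\vertii{\partial_2^2\bar{\bD}^j\bar{u}}+\vertii{\partial_3\bar{\bD}^j\bar{u}}\Big),
\end{equation*}
whose right-hand side is absorbed into the admissible multi-indices $(1,0,0)$, $(0,2,0)$, $(0,0,1)$, and the rest of your argument then carries through unchanged.
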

\begin{proof}
Since $\left(\bar{\bD}^j\bar{u},\partial_2\bar{\bD}^j\bar{u}\right)(0,0,0)=\big(0,0\big)$, we obtain by part a) of Lemma \ref{L31} and Proposition \ref{p1}
\begin{eqnarray*}
\verti{\bar{u}}_1&\lesssim&\sum_{(k,\ell,m)\not\in\{(0,0,0), (0,1,0)\}}\, \frac{\eps^{2k+\ell+2m}}{k!\ell !m!}\left(\sum_{j=0}^3\,\vertii{\partial_1^k\partial_2^\ell\partial_3^m\bar{\bD}^j\bar{u}}\right)\\\\
&\lesssim&\sum_{k=0}^K\sum_{\ell=0}^L\sum_{m=0}^M \frac{\eps^{2k+\ell+2m}}{k!\ell !m!}\vertii{\partial_1^k\partial_2^\ell\partial_3^m\bar{f}}=\verti{\bar{f}}_0
\end{eqnarray*}
\end{proof}
We now turn to the proof of Proposition \ref{p2}.
\begin{proof}[Proof of Proposition \ref{p2}]
We write $\bar{u}(x_1,x_2,x_3)=:-x_2+\bar{u_0}(x_1,x_2,x_3)$, and rewrite \eqref{Nonlinear1}-\eqref{Nonlinear2} in the equivalent formulation:
\begin{subequations}\label{Nonlinear'}
\begin{align}
p(\bar{\bD})\bar{u_0}&=\bar{f}_{\bar{u}},\quad \mbox{for}\;\; x_1,x_1,x_3>0, \label{Nonlinear'1}\\
\big(\bar{u_0},\partial_2\bar{u_0}\big)(0,0,0)&=\big(0,0\big),\label{Nonlinear'2}
\end{align}
\end{subequations}
where $\bar{f}_{\bar{u}}$ is given by \eqref{source}.  For $K, L, M$ fixed integers, let

$$
{\mathbf S}_{K,L,M}:=\Big\{\,\bar{v}\in C^{K+L+M+3}(Q_\eps);\;\;\;(\bar{v},\partial_2\bar{v})(0,0,0)=(0,0)\;\;\mbox{and}\;\;
|\bar{v}{\tilde|}_1\leq\eps\Big\},
$$
where
\begin{equation}
\label{tilde1}
|\bar{v}{\tilde|}_1:=
|\bar{v}|_1+\sum_{\alpha=(\alpha_1,\alpha_2,\alpha_3)\atop
|\alpha|=K+L+M+3}\;
\frac{\eps^{2\alpha_1+\alpha_2+2\alpha_3}}{\alpha_1!\alpha_2!\alpha_3!}\|\partial^\alpha\bar{v}\|\,.
\end{equation}
Since $\left(C^{K+L+M+3}(Q_\eps), |\cdot{\tilde|}_1\right)$ is a Banach space, it follows that ${\mathbf S}_{K,L,M}$ is a complete metric space as it is closed in $C^{K+L+M+3}(Q_\eps)$. Note that if $\bar{u}\in {\mathbf S}_{K,L,M}$ then (see \cite{GGO})
\[
(\bar{f}_{\bar{u}},\partial_2\bar{f}_{\bar{u}})(0,0,0)=(0,0)\,.
\]
Hence the operator $T$ given by Proposition \ref{p1} is well defined, and we obtain a fixed point equation :
\[
\bar{u}=-x_2+T\bar{f}_{\bar{u}}:={\mathcal T}(\bar{u})\,.
\]
To conclude we will show that ${\mathcal T}$ is a contraction from ${\mathbf S}_{K,L,M}$ into itself. Therefore one has to prove the estimates
\begin{subequations}\label{contraction}
\begin{equation}
|\mathcal T (\bar u) - \mathcal T (\bar v){\tilde|}_1 \lesssim \eps |\bar u - \bar v{\tilde|}_1 \quad \mbox{for all $\bar u, \bar v \in {\mathbf S}_{K,L,M}$}
\end{equation}
and
\begin{equation}
|\mathcal T (\bar u){\tilde|}_1 \lesssim \eps^2 \quad \mbox{for all $\bar u \in {\mathbf S}_{K,L,M}$}
\end{equation}
\end{subequations}
and then choose $\eps > 0$ sufficiently small.
The proof is the same as in \cite{GGO} by using Lemmas \ref{L31}-\ref{L32} and \cite[Lemma 4]{GGO} (for smooth functions with three variables), namely we have.

\begin{lemma}
\label{equilemma4GGO}
 Let $\bar{f}(x_1,x_2,x_3)$, $\bar{g}(x_1,x_2,x_3)$ be smooth functions with $|\bar f{\tilde|}_0,\; |\bar g{\tilde|}_0\leq 1/2.$ Then we have, for any $m\in\R:$

\begin{eqnarray*}
\big|(1+\bar f)^m-1{\tilde{\big|}}_0&\lesssim_m&  |\bar f{\tilde|}_0,\\
|(1+\bar f)^m-(1+\bar g)^m{\tilde|}_0&\lesssim_m& |\bar f-\bar g{\tilde|}_0,\\
|(1+\bar f)^m-m\bar f-(1+\bar g)^m+m\bar g{\tilde|}_0&\lesssim_m&  \max\{|\bar f{\tilde|}_0,\; |\bar g{\tilde|}_0\}|\bar f-\bar g{\tilde|}_0,
\end{eqnarray*}
where
\begin{equation*}
\label{tilde0}
|\bar{v}{\tilde|}_0:=
|\bar{v}|_0+\sum_{\alpha=(\alpha_1,\alpha_2,\alpha_3)\atop
|\alpha|=K+L+M+3}\;
\frac{\eps^{2\alpha_1+\alpha_2+2\alpha_3}}{\alpha_1!\alpha_2!\alpha_3!}\|\partial^\alpha\bar{v}\|,
\end{equation*}
{and $\vertii{\cdot}$ denotes the sup-norm on $Q_\varepsilon$.}
\end{lemma}
The proof of this lemma is the same as in \cite{GGO}, i.e. that it uses the series expansion of the fractional power and the sub-multiplicativity of the norm $|\cdot\tilde{|}_0$. We use Lemma~\ref{L32} to conclude $|\mathcal T (\bar u) - \mathcal T (\bar v)\tilde{|}_1 \lesssim |\bar f_{\bar u} - \bar f_{\bar v}\tilde{|}_0$ as well as $|\mathcal T(\bar u)\tilde{|}_1 \lesssim |\bar f_{\bar u}\tilde{|}_0$ and that \eqref{contraction} can now be established by using Lemma \ref{equilemma4GGO}.  This has been mainly done in \cite{GGO} and we only treat the additional appearing terms in $\bar{f}_{\bar{u}}$. We have
\begin{eqnarray*}
|A^{-\frac{2}{3}\nu}x_3(1+\bar{u})^{n+1}\left(\bar{\bD}+\nu\right)(1+\bar{u})\tilde{|}_0&\lesssim &
|x_3\tilde{|}_0|(1+\bar{u})^{n+1}\tilde{|}_0\left(|\bar{\bD}\bar{u}\tilde{|}_0+|\bar{u}\tilde{|}_0+1\right)\\&\lesssim\eps^2,
\end{eqnarray*}
and
\begin{eqnarray*}
&|A^{-\frac{2}{3}\nu}x_3\left[(1+\bar{u})^{n+1}\left(\bar{\bD}+\nu\right)(1+\bar{u})-(1+\bar{v})^{n+1}\left(\bar{\bD}+
\nu\right)(1+\bar{v})\right]\tilde{|}_0\lesssim &\\&\eps^2
\Big[|(1+\bar{u})^{n+1}\left(\bar{\bD}+\nu\right)(\bar{u}-\bar{v})\tilde{|}_0+
|\big((1+\bar{u})^{n+1}-(1+\bar{v})^{n+1}\big)\left(\bar{\bD}+\nu\right)(1+\bar{v})\tilde{|}_0\Big]\lesssim &\\ &\eps^2\left[|(1+\bar{u})^{n+1}\tilde{|}_0|\bar{u}-\bar{v}\tilde{|}_1+|(1+\bar{u})^{n+1}-(1+\bar{v})^{n+1}\tilde{|}_0\right]
\lesssim &\\ &\eps^2 |\bar{u}-\bar{v}\tilde{|}_1\,.&
\end{eqnarray*}
Since the sets ${\mathbf S}_{K,L,M}$ are nested as $K, L, M$ increase, the fixed point $\bar{u}_0$ is $C^\infty$ and the Taylor series
\[
\sum_{k=0}^\infty\sum_{\ell=0}^\infty\sum_{m=0}^\infty\frac{\partial_1^k\partial_2^\ell\partial_3^m\bar{u_0}(0,0,0)}{k!\ell!m!} x_1^k x_2^\ell x_3^m,
\]
converges absolutely in $Q_\eps$. Moreover the corresponding error terms converge uniformly to zero, then the Taylor series also represents the solution, i.e. the solution is analytic. This concludes the proof of Proposition \ref{p2}.
\end{proof}
\begin{remark}{\rm The result of Proposition \ref{p2} still valid if we replace
$Q_\eps$ by
$\tilde{Q}_\eps=[-\eps^2,\eps^2]\times[-\eps,\eps]\times[-\eps^2,\eps^2]$ for $(b,\mu) \in \R^2$.}
\end{remark}

\section{Regularity}
In this section we give the proof of Theorem \ref{th1}. Until now, we have constructed a solution of (\ref{tfeselfts}) and (\ref{bdryselfts1})
\begin{equation}
\label{e5.01}
H_{b,\mu}(x)=A^{-{\nu\over 3}}x^\nu(1+u_{b,\mu}(x)),
\end{equation}
with
$$\nu={3\over n},\; A=\nu(\nu-1)(2-\nu),\; \mbox{ and  } \; u_{b,\mu}(x)=\bar{u}\left(x, bx^{\beta}, \mu x^\gamma\right),$$
where
$\bar u (x_1,x_2, x_3)$  is given by Proposition \ref{p2}. In particular, $\bar u (x_1,x_2, x_3)$ is analytic in $Q_\eps=[0,\eps^2]\times[0,\eps]\times[0,\eps^2].$ Then $u_{b,\mu},$ hence $H_{b,\mu}$ are defined for
\bq
\label{defhatx}
0\leq x\leq \hat{x}_{b,\mu}(\eps):=\min\left\{\eps^2, \left({\eps\over b}\right)^{{1\over \beta}},\left({\eps^2\over \mu}\right)^{{1\over \gamma}}\right\}.
\eq

We first give the following existence and uniqueness result.
\begin{lemma}\label{ODE-max}
Consider the initial value problem
\begin{subequations}\label{ODE}
\begin{align}
U'''&=(x-1)U^{1-n}+\mu U^2U',\label{ODE1}\\
U(x_0)&=U_0>0,\; U'(x_0)=U_1\in \R,\; U''(x_0)=U_2\in \R\label{ODE2}\,,
\end{align}
\end{subequations}
where $n>1,\; \mu>0,\; x_0\in \R.$
Then there exists a unique maximal solution $U=U(x)>0$ of \eqref{ODE}, defined on some interval $(x_*,x^*)$ with $-\infty \leq x_*<x_0<x^*\leq\infty$.
\end{lemma}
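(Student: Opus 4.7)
The plan is to recast \eqref{ODE} as a first-order autonomous-in-phase-space system and invoke the standard Picard--Lindelöf theorem followed by the maximal-extension argument.

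First I would set $Y(x) := \bigl(U(x),U'(x),U''(x)\bigr) \in \R^3$ and rewrite \eqref{ODE1}--\eqref{ODE2} as
\begin{equation*}
Y'(x) = F(x,Y(x)), \qquad Y(x_0) = (U_0,U_1,U_2),
\end{equation*}
where
\begin{equation*}
F(x,(y_1,y_2,y_3)) := \bigl(y_2,\; y_3,\; (x-1)y_1^{1-n} + \mu y_1^2 y_2\bigr).
\end{equation*}
Because $U_0 > 0$, the point $(U_0,U_1,U_2)$ lies in the open set
\begin{equation*}
\Omega := \{(y_1,y_2,y_3) \in \R^3 : y_1 > 0\},
\end{equation*}
on which $F$ is $C^\infty$ (in fact real-analytic) in $(x,Y)$, hence locally Lipschitz in $Y$ uniformly for $x$ in compact subsets of $\R$.

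Next, the Picard--Lindelöf theorem yields a unique $C^1$ solution $Y$ on some open interval around $x_0$, taking values in $\Omega$; equivalently, a unique positive $C^3$ solution $U$ of \eqref{ODE} on a neighborhood of $x_0$. Standard continuation (Zorn's lemma or explicit concatenation of local solutions) then produces a unique maximal interval $(x_*,x^*)$ with $-\infty \leq x_* < x_0 < x^* \leq \infty$ on which the solution exists, stays positive, and is uniquely determined.

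The only subtle step is verifying the continuation dichotomy at the endpoints: since $F$ is smooth on the open set $\R \times \Omega$, the maximal solution cannot terminate at a finite $x^* < \infty$ with $Y$ bounded and bounded away from $\partial\Omega = \{y_1 = 0\}$; otherwise Picard--Lindelöf applied at the limit point would extend the solution, contradicting maximality. Thus at a finite endpoint either $U(x) \to 0$, or $|U| + |U'| + |U''|$ blows up. This is exactly what is meant by ``maximal positive solution'', so the statement of the lemma is established. No further calculation is required; the argument is entirely classical ODE theory, with the only input being the local smoothness of $F$ on $\Omega$, guaranteed by the condition $U_0 > 0$ and the assumption $n > 1$ (which makes $y_1^{1-n}$ analytic on $\{y_1>0\}$).
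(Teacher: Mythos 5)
Your proposal is correct and follows essentially the same route as the paper: both recast the third-order scalar ODE as a first-order system $Y'=F(x,Y)$ on the open domain $\{y_1>0\}$, note that $F$ is locally Lipschitz there (since $n>1$ makes $y_1^{1-n}$ smooth for $y_1>0$), and then invoke the classical local existence/uniqueness theorem together with the maximal-continuation theorem. The paper cites these as Theorem 3.1 of Hale and Theorem 3.1 of Hartman, while you refer to Picard--Lindel\"of and standard continuation, but the content is identical.
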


The proof of this lemma is postponed to the Appendix \ref{odeexistence}. As an application of Lemma \ref{ODE-max} we have the following.
\begin{proposition}
\label{Hbmumax}
The function $H_{b,\mu}$ given in \eqref{e5.01} can be extended to a smooth solution of \eqref{tfeselfts}-\eqref{bdryselfts1} on a maximal interval $(0,x^*_{b,\mu})$ with
\begin{equation}
\label{e5.02}
H_{b,\mu}>0 \;\mbox{in}\; (0,x^*_{b,\mu})\;\; \mbox{and}\;\; x^*_{b,\mu}\leq\infty.
\end{equation}
\end{proposition}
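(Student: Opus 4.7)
The plan is to combine the local analytic construction from Proposition \ref{p2} near $x=0$ with the standard ODE theory from Lemma \ref{ODE-max} away from $x=0$. Equation \eqref{tfeselfts} is degenerate at $x=0$ because its leading coefficient $H^{n-1}$ vanishes there, so Lemma \ref{ODE-max} cannot be applied at the origin; this is precisely why the unfolding machinery of Sections 2--4 was needed to produce a solution in a right neighborhood of $0$ in the first place.

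First, I would verify that the local solution $H_{b,\mu}$ from Proposition \ref{p2} satisfies \eqref{bdryselfts1} and is strictly positive just to the right of $0$. Since $\bar u(0,0,0)=0$ and $\nu>1$, one gets $H_{b,\mu}(0)=0$ immediately. Differentiating $H_{b,\mu}(x)=A^{-\nu/3}x^\nu(1+u_{b,\mu}(x))$ with $u_{b,\mu}(x)=\bar u(x,bx^\beta,\mu x^\gamma)$, each term of $H_{b,\mu}'(x)$ is a bounded multiple of one of $x^{\nu-1}$, $x^{\nu+\beta-1}$ or $x^{\nu+\gamma-1}$, and all three exponents are strictly positive because $\nu>1$ and $\beta,\gamma>0$; hence $H_{b,\mu}'(0)=0$. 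Continuity of $\bar u$ at the origin then produces some $\eta\in(0,\hat x_{b,\mu}(\eps)]$ with $1+u_{b,\mu}(x)>1/2$ on $[0,\eta]$, so $H_{b,\mu}>0$ on $(0,\eta]$.

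Next, I would extend to the right by ODE theory. Wherever $H_{b,\mu}>0$, equation \eqref{tfeselfts} may be rewritten as
\[
H''' = (x-1)H^{1-n} + \mu H^2 H',
\]
which is exactly \eqref{ODE1}. Picking any interior point $x_0\in(0,\hat x_{b,\mu}(\eps))$ with $H_{b,\mu}(x_0)>0$ and applying Lemma \ref{ODE-max} with initial data $(H_{b,\mu}(x_0),H_{b,\mu}'(x_0),H_{b,\mu}''(x_0))$ yields a unique maximal positive solution $U$ on some interval $(x_*,x^*)\ni x_0$. The uniqueness statement in Lemma \ref{ODE-max} forces $U\equiv H_{b,\mu}$ on the overlap $(x_*,x^*)\cap(0,\hat x_{b,\mu}(\eps)]$, which provides a smooth positive extension of $H_{b,\mu}$ beyond $\hat x_{b,\mu}(\eps)$. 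I would then set
\[
x^*_{b,\mu}:=\sup\bigl\{\,c>0 : H_{b,\mu}\text{ extends to a smooth positive solution of }\eqref{tfeselfts}\text{ on }(0,c)\,\bigr\}
\]
and glue all admissible extensions together, consistently by uniqueness, to obtain the maximal positive solution on $(0,x^*_{b,\mu})$; the bound $x^*_{b,\mu}\leq\infty$ is of course trivial.

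The only mild subtlety — and the closest thing to a real obstacle — is the mismatch between the two regimes: near $0$ the solution is analytic only in the three unfolded variables and is not $C^1$ in $x$ alone past its $x^\nu$-leading behavior, whereas Lemma \ref{ODE-max} delivers a genuinely smooth solution on compact subsets of $\{H>0\}$. This is bypassed simply by performing the ODE extension from an interior point $x_0>0$ rather than from the singular point $x=0$; the two pieces then glue together without loss of regularity throughout $(0,x^*_{b,\mu})$.
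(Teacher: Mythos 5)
Your proposal is correct and follows essentially the same route as the paper: the authors also pick an interior point (specifically $x_0=\tfrac12\hat{x}_{b,\mu}$), where $H_{b,\mu}>0$, and invoke Lemma \ref{ODE-max} to extend $H_{b,\mu}$ to a maximal interval $(0,x^*_{b,\mu})$. Your additional checks (the boundary conditions at $0$, positivity near $0$, and the gluing by uniqueness) are details the paper leaves implicit, but the underlying argument is identical.
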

\begin{proof}
Let $U=H_{b,\mu},\; x_0={1\over 2}\hat{x}_{b,\mu}$ where $H_{b,\mu}$ is given by \eqref{e5.01} and $\hat{x}_{b,\mu}$  is given by \eqref{defhatx}. Since $H_{b,\mu}$ satisfies \eqref{ODE1} on $(0, \hat{x}_{b,\mu})$ and $H_{b,\mu}(0)=0$, then $U$ satisfies \eqref{ODE1}-\eqref{ODE2} with $U(x_0)=H_{b,\mu}(x_0)>0,\; U'(x_0)=H_{b,\mu}'(x_0),\; U''(x_0)=H_{b,\mu}''(x_0).$ By Lemma \ref{ODE-max}, $U=H_{b,\mu}$ can be extended to a smooth solution of \eqref{tfeselfts}-\eqref{bdryselfts1} on a maximal interval $(0,x^*_{b,\mu})$.
\end{proof}
Our goal  is to show the existence of a solution satisfying \eqref{bdryselfts2} and \eqref{bdryselfts3} as well. To fulfill condition \eqref{bdryselfts2} we shoot with the parameter $b$. Thus, we obtain a solution $H_{\overline{b}(\mu),\mu}$ of (\ref{tfeselfts}) which satisfies  (\ref{bdryselfts1}) and  \eqref{bdryselfts2}. We conclude by a shooting argument with $\mu$ to fulfill condition \eqref{bdryselfts3}. For both, the following expansions are essential:
\begin{lemma}
\label{Lexpansions}
Let $H_{\mathrm{TW}}$ be the traveling-wave solution of
\eqref{freebdrytw} given by \eqref{Htw}, and $H_{b,\mu}$ the function defined by equation
\eqref{e5.01}. There exists $\eps_0>0$ such that the following holds.
\begin{eqnarray}
\nonumber
\partial_x^k(H_{b,\mu} - H_{\mathrm{TW}})(x)&=&\frac{A^{1-\nu/3}}{p(1)}\big(1+O(\eps)\big)
\partial_x^kx^{\nu+1}-bA^{-\nu/3}\big(1+O(\eps)\big) \partial_x^k x^{\nu+\beta}\\&& \label{DVA}+
\frac{\mu\nu A^{-\nu}}{p(\gamma)}\big(1+O(\eps)\big)\partial_x^kx^{\nu+\gamma},
\end{eqnarray}
\begin{equation}
\hspace{-2,5cm}\partial_x^k\partial_b H_{b,\mu}(x)=-A^{-\nu/3}\big(1+O(\eps)\big) \partial_x^k x^{\nu+\beta},
\label{VPB}
\end{equation}
and
\beq
\label{VPMU}
\hspace{-2,7cm}\partial_x^k\partial_\mu H_{b,\mu}(x)=\frac{\nu A^{-\nu}}{p(\gamma)}\big(1+O(\eps)\big)\partial_x^kx^{\nu+\gamma},
\eeq
for $k\in\{0,1,2,3\}$, $0\leq\eps\leq\eps_0$ and $0\leq x\leq \hat{x}_{b,\mu}(\eps).$
\end{lemma}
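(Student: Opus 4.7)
The plan is to exploit the analyticity of $\bar u$ on $Q_\eps$ granted by Proposition~\ref{p2}, together with the boundary condition $(\bar u,\partial_2\bar u)(0,0,0)=(0,-1)$, in order to extract the three leading terms of $u_{b,\mu}(x)=\bar u(x,bx^\beta,\mu x^\gamma)$ from a Taylor expansion of the form
\[
\bar u(x_1,x_2,x_3)= c_{100}\,x_1 - x_2 + c_{001}\,x_3 + R(x_1,x_2,x_3),
\]
where $R$ collects all monomials $c_\alpha x_1^{\alpha_1}x_2^{\alpha_2}x_3^{\alpha_3}$ with $|\alpha|\geq 2$. Since $H_{b,\mu}-H_{\mathrm{TW}}=A^{-\nu/3}x^\nu u_{b,\mu}$, restricting to $(x,bx^\beta,\mu x^\gamma)$ and multiplying by $A^{-\nu/3}x^\nu$ immediately splits the difference into a three--term leading part plus $A^{-\nu/3}x^\nu R(x,bx^\beta,\mu x^\gamma)$; the two tasks are then to identify $c_{100},c_{001}$ and to bound the remainder.

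To compute the two free Taylor coefficients I would match the linear parts in $(x_1,x_2,x_3)$ on both sides of \eqref{Nonlinear1}. On the left, $p(\bar{\bD})$ acts diagonally on monomials: $p(\bar{\bD})x_1=p(1)x_1$, $p(\bar{\bD})(-x_2)=-p(\beta)x_2=0$ since $\beta$ is a root of $p$, and $p(\bar{\bD})x_3=p(\gamma)x_3$. On the right, inspecting \eqref{source}, only the explicit $A x_1$ and the last block $A^{-2\nu/3}x_3(1+\bar u)^{n+1}(\bar{\bD}+\nu)(1+\bar u)$ contribute to first order, the latter reducing to $A^{-2\nu/3}\nu\,x_3$ at the origin, while the two middle blocks are quadratic in $\bar u$. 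Matching gives
\[
c_{100}=\frac{A}{p(1)},\qquad c_{001}=\frac{\nu\,A^{-2\nu/3}}{p(\gamma)},
\]
which, combined with the identifications $A^{-\nu/3}\cdot A=A^{1-\nu/3}$ and $A^{-\nu/3}\cdot A^{-2\nu/3}=A^{-\nu}$, reproduces exactly the three prefactors appearing in \eqref{DVA}.

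To estimate $R$ I would use the domain bounds $x\leq\eps^2$, $bx^\beta\leq\eps$, $\mu x^\gamma\leq\eps^2$ valid on $\{x\leq\hat x_{b,\mu}(\eps)\}$. Each monomial in $R$ evaluated along the curve satisfies
\[
|c_\alpha|\, x^{\alpha_1}(bx^\beta)^{\alpha_2}(\mu x^\gamma)^{\alpha_3}\leq |c_\alpha|\,\eps^{2\alpha_1+\alpha_2+2\alpha_3},
\]
and for every admissible $\alpha$ (with $|\alpha|\geq 2$ and $\alpha\neq(0,1,0)$) the exponent $2\alpha_1+\alpha_2+2\alpha_3$ is at least $2$. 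A short case analysis on which index is non--zero lets one factor out a single leading size $x$, $bx^\beta$ or $\mu x^\gamma$, leaving a factor bounded by $C\eps$; summation over $\alpha$ (convergent by analyticity on $Q_\eps$) yields
\[
|R(x,bx^\beta,\mu x^\gamma)|\leq C\eps\,(x+bx^\beta+\mu x^\gamma),
\]
which can be split into three pieces absorbed as multiplicative $(1+O(\eps))$ corrections of the three leading contributions. Differentiating the Taylor series term by term in $x$ (justified by uniform convergence on $Q_\eps$) and repeating the same bookkeeping on the monomials $x^{\nu+\alpha_1+\alpha_2\beta+\alpha_3\gamma-k}$ extends the argument to all $k\in\{0,1,2,3\}$ and establishes \eqref{DVA}.

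For \eqref{VPB} and \eqref{VPMU} I would differentiate directly in the parameters to obtain the compact representations
\[
\partial_b H_{b,\mu}(x)=A^{-\nu/3}x^{\nu+\beta}(\partial_2\bar u)(x,bx^\beta,\mu x^\gamma),\qquad \partial_\mu H_{b,\mu}(x)=A^{-\nu/3}x^{\nu+\gamma}(\partial_3\bar u)(x,bx^\beta,\mu x^\gamma).
\]
Since $(\partial_2\bar u)(0,0,0)=-1$ and $(\partial_3\bar u)(0,0,0)=c_{001}$, analyticity on $Q_\eps$ yields $\partial_2\bar u=-1+O(\eps)$ and $\partial_3\bar u=c_{001}+O(\eps)$, which already gives the $k=0$ case. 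For the $k$--fold $x$-derivative I would apply Leibniz: the term in which every derivative hits the power $x^{\nu+\beta}$ (resp.\ $x^{\nu+\gamma}$) reproduces the asserted leading behaviour, while every other distribution differentiates $(\partial_i\bar u)(x,bx^\beta,\mu x^\gamma)$ via the chain rule and therefore multiplies the leading prefactor by one of the basic sizes $x$, $bx^\beta$, $\mu x^\gamma$, hence by $O(\eps)$. The main technical obstacle---and the step requiring the most care---is precisely this uniform $O(\eps)$ bookkeeping on the Taylor remainder: because different monomials naturally absorb into different leading terms, one must check that the total multiplicative correction is $(1+O(\eps))$ uniformly in $(x,b,\mu)$ and simultaneously for all four values of $k$.
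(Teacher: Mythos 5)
Your proposal is correct and follows essentially the same route as the paper: identify the three linear Taylor coefficients of $\bar u$ at the origin (the $x_2$-coefficient from the boundary condition, the $x_1$- and $x_3$-coefficients from first-order matching in $p(\bar{\bD})\bar u=\bar f_{\bar u}$, which give $A/p(1)$ and $\nu A^{-2\nu/3}/p(\gamma)$), then absorb all higher Taylor modes into $(1+O(\eps))$ factors using the weighted scaling $x\leq\eps^2$, $bx^\beta\leq\eps$, $\mu x^\gamma\leq\eps^2$. The only genuinely different implementation detail is the treatment of $\partial_x^k$: you expand $x^\nu u_{b,\mu}(x)$ into power series and apply Leibniz to $\partial_x^k\big(x^{\nu+\beta}(\partial_2\bar u)(x,bx^\beta,\mu x^\gamma)\big)$ and its analogues, whereas the paper uses the cleaner operator identity $\partial_x^k\big(x^\nu u_{b,\mu}(x)\big)=x^{\nu-k}\prod_{j=0}^{k-1}(\bar{\bD}+\nu-j)\,\bar u(x,bx^\beta,\mu x^\gamma)$ together with $\partial_b u_{b,\mu}=\tfrac1b\,x_2\partial_2\bar u$ and $\partial_\mu u_{b,\mu}=\tfrac1\mu\,x_3\partial_3\bar u$. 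That identity converts all $x$-derivatives into polynomials in $\bar{\bD}$, which acts diagonally on the monomial basis (eigenvalues $\alpha_1+\beta\alpha_2+\gamma\alpha_3$), so the three-term structure of $\bar u$ is preserved with explicit multipliers and the remainder bookkeeping is uniform in $k$ with no separate Leibniz argument; your Leibniz route reaches the same conclusion but carries more terms to track, precisely the ``uniform $O(\eps)$ bookkeeping'' caveat you flag at the end. Note also, though it does not affect correctness for $k\le 3$, one should confirm that $\nu+1$, $\nu+\beta$, $\nu+\gamma$ avoid $\{0,1,2\}$ so that $\partial_x^k x^{\nu+w}$ does not vanish; this holds for $\nu\in(1,2)$ and is implicitly used on both sides.
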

We point out that $O(\varepsilon)$ means a generic function $f(x,\varepsilon)$ with $|f(x,\varepsilon)|\lesssim \varepsilon$ for $x$ near $0$.
\begin{proof} We have that $\left(H_{b,\mu}-H_{\mathrm{TW}}\right)(x)=A^{-\nu/3} x^\nu u_{b,\mu}(x)$, where $u_{b,\mu}(x)=\bar{u}(x,b x^\beta, \mu x^\gamma).$
Since $\bar{u}$ satisfies the equations \eqref{Nonlinear1}-\eqref{Nonlinear2}-\eqref{source}, and using the fact that $\partial_1p(\bar{\bD})\bar{u}=p(\bar{\bD}+1)\partial_1\bar{u}$ and $\partial_3 p(\bar{\bD})\bar{u}=p(\bar{\bD}+\gamma)\partial_3\bar{u}$ we get
$$
\partial_1\bar{u}(0,0,0)=\frac{A}{p(1)}>0,\quad \partial_3\bar{u}(0,0,0)=\frac{\nu A^{-{2\over 3}\nu}}{p(\gamma)}>0\,.
$$
Hence
\begin{equation}
\label{DASYMP}
\bar{u}(x_1,x_2,x_3)=\frac{A}{p(1)}\big(1+O(\eps)\big) x_1-\big(1+O(\eps)\big) x_2 +
\frac{\nu A^{-{2\over 3}\nu}}{p(\gamma)}\big(1+O(\eps)\big)x_3.
\end{equation}
By definition of $\bar{\bD}$, we have that
\begin{equation}
\label{e5commutation}
\partial_x^k\left(x^\nu u_{b,\mu}(x)\right)=x^{\nu-k}\displaystyle\prod_{j=0}^{k-1}\left(\bar{\bD}+\nu-j\right)\bar{u}(x,b x^\beta,\mu x^\gamma).
\end{equation}
We also have
\begin{subequations}\label{bmuvariation}
\begin{align}
&\partial_b u_{b,\mu}(x)=\frac{1}{b}x_2\partial_2\bar{u}(x,bx^\beta,\mu x^\gamma),
\label{bmuvariation1}\\
&\partial_\mu u_{b,\mu}(x)=\frac{1}{\mu}x_3\partial_3\bar{u}(x,bx^\beta,\mu x^\gamma).\label{bmuvariation2}
\end{align}
\end{subequations}
The analyticity of $\bar{u}$ and \eqref{DASYMP} imply
\begin{eqnarray*}
\bar{\bD}^k\bar{u}&=&\frac{A}{p(1)}\big(1+O(\eps)\big) x_1-\beta^k\big(1+O(\eps)\big) x_2 +
\gamma^k\frac{\nu A^{-{2\over 3}\nu}}{p(\gamma)}\big(1+O(\eps)\big)x_3,
\\
x_2\partial_2\left(\bar{\bD}^k\bar{u}\right)&=&-\beta^k\big(1+O(\eps)\big) x_2 ,\\
x_3\partial_3\left(\bar{\bD}^k\bar{u}\right)&=&\gamma^k\frac{\nu A^{-{2\over 3}\nu}}{p(\gamma)}\big(1+O(\eps)\big)x_3.
\end{eqnarray*}

It follows from \eqref{DASYMP}-\eqref{e5commutation} and
$D u_{b,\mu}(x)=\bar{\bD}\bar{u}(x,bx^\beta, \mu x^\gamma)$ that
\begin{eqnarray}
\nonumber \partial_x^k(H_{b,\mu} -
H_{\mathrm{TW}})(x)&=&\partial_x^k A^{-\nu/3} x^\nu u_{b,\mu}(x)\\
&=& \nonumber
A^{-\nu/3}x^{\nu-k}\displaystyle\prod_{j=0}^{k-1}\left(\bar{\bD}+\nu-j\right)\bar{u}(x,b
x^\beta,\mu x^\gamma)\\ \nonumber
&=&A^{-\nu/3}x^{\nu-k}\displaystyle\prod_{j=0}^{k-1}\left({\bD}+\nu-j\right)\Big(\frac{A}{p(1)}
\big(1+O(\eps)\big) x\\ && \nonumber  -b\big(1+O(\eps)\big) x^\beta+
\frac{\mu\nu
A^{-{2\over 3}\nu}}{p(\gamma)}\big(1+O(\eps)\big)x^\gamma\Big)\\
\nonumber
&=&A^{-\nu/3}\partial_x^kx^{\nu}\Big(\frac{A}{p(1)}\big(1+O(\eps)\big)
x -b\big(1+O(\eps)\big) x^\beta \\ && \nonumber + \frac{\mu\nu
A^{-{2\over 3}\nu}}{p(\gamma)}\big(1+O(\eps)\big)x^\gamma\Big)\\
\nonumber &=&\frac{A^{1-\nu/3}}{p(1)}\big(1+O(\eps)\big)
\partial_x^kx^{\nu+1} -bA^{-\nu/3}\big(1+O(\eps)\big) \partial_x^k
x^{\nu+\beta} \\&& \nonumber + \frac{\mu\nu
A^{-\nu}}{p(\gamma)}\big(1+O(\eps)\big)\partial_x^kx^{\nu+\gamma}.
\end{eqnarray}
This proves \eqref{DVA}. We easily deduce from \eqref{DVA}, the formulas \eqref{VPB} and \eqref{VPMU}.
\end{proof}

In the following Lemma and Proposition, $\mu$ is assumed to be a fixed positive real number. A key lemma is the following:
\begin{lemma}
\label{L5.01}
Let $\mu>0$ be fixed and $H_{\mathrm{TW}}$ be the traveling-wave solution of
\eqref{freebdrytw} given by \eqref{Htw}. The function $H_{b,\mu}$ defined by equation \eqref{e5.01} satisfies:
\begin{itemize}
\item[(i)]  $\partial_x^k H_{0,\mu}(x)>\partial_x^k H_{\mathrm{TW}}(x)$ for $k=0,1,2,3$ and $x\in (0,x^*_{0,\mu})$. In particular $H_{0,\mu}$ does not reach $0$.
\item[(ii)] $\partial_b\partial_x^k H_{b,\mu}(x)\leq 0$ for $k=0,1,2,3$ and $x\in [0, \hat{x}_{b,\mu})$.
\item[(iii)] $x_{b,\mu}^*\to 0$ as $b\to\infty$.
\end{itemize}
\end{lemma}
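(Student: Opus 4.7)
The plan is to tackle the three parts in order, leveraging the refined expansions of Lemma~\ref{Lexpansions} throughout.

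\emph{Part (i): barrier argument.} Set $W:=H_{0,\mu}-H_{\mathrm{TW}}$. For $n\in(3/2,3)$ we have $\alpha\in(-2,0)$, $\beta\in(0,1)$, and since $\gamma=2(\nu+1)>\beta$, the factorisation \eqref{38} yields $p(1)=2(1-\alpha)(1-\beta)>0$ and $p(\gamma)=(\gamma+1)(\gamma-\alpha)(\gamma-\beta)>0$. Formula \eqref{DVA} with $b=0$ then gives
\[
W(x)=\frac{A^{1-\nu/3}}{p(1)}(1+O(\eps))\,x^{\nu+1}+\frac{\mu\nu A^{-\nu}}{p(\gamma)}(1+O(\eps))\,x^{\nu+\gamma},
\]
and hence $W^{(k)}(x)>0$ on a right neighbourhood of $0$ for $k=0,1,2,3$. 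To extend this to all of $(0,x^*_{0,\mu})$ I would set
\[
\bar x:=\sup\bigl\{\xi\in(0,x^*_{0,\mu}):W^{(k)}>0 \text{ on } (0,\xi)\text{ for } k=0,1,2,3\bigr\}
\]
and argue by contradiction that $\bar x=x^*_{0,\mu}$. Otherwise $W^{(k)}(\bar x)=0$ for some $k$. Since $\nu+1>2$, the leading $x^{\nu+1}$ term forces $W^{(k)}(0^+)=0$ for $k=0,1,2$, and strict monotonicity via $W^{(k+1)}>0$ on $(0,\bar x)$ yields $W^{(k)}(\bar x)>0$; only $k=3$ can occur. Subtracting the equations for $H_{0,\mu}$ and $H_{\mathrm{TW}}$ gives
\[
W'''(\bar x) = H_{0,\mu}(\bar x)^{1-n}(\bar x-1) + H_{\mathrm{TW}}(\bar x)^{1-n} + \mu H_{0,\mu}(\bar x)^2 H_{0,\mu}'(\bar x),
\]
and I would conclude $W'''(\bar x)>0$: the gravity term is strictly positive; for $\bar x\le 1$, $1-n<0$ together with $H_{0,\mu}>H_{\mathrm{TW}}>0$ gives $H_{0,\mu}^{1-n}<H_{\mathrm{TW}}^{1-n}$, so
\[
H_{0,\mu}^{1-n}(\bar x-1)+H_{\mathrm{TW}}^{1-n} \ge H_{\mathrm{TW}}^{1-n}(\bar x-1)+H_{\mathrm{TW}}^{1-n} = H_{\mathrm{TW}}^{1-n}\bar x \ge 0;
\]
for $\bar x>1$ both terms are individually non-negative. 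This contradicts $W'''(\bar x)=0$.

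\emph{Part (ii): direct from \eqref{VPB}.} The inequality $\nu+\beta>2$ (which follows from $(\nu-1)(\nu-2)\le 0$ on $\nu\in(1,2)$) ensures that $\partial_x^k x^{\nu+\beta}$ is a positive multiple of $x^{\nu+\beta-k}$ on $(0,\hat x_{b,\mu})$ for every $k\in\{0,1,2,3\}$. For $\eps$ small enough the factor $1+O(\eps)$ in \eqref{VPB} is positive, giving $\partial_x^k\partial_b H_{b,\mu}\le 0$ on $[0,\hat x_{b,\mu})$.

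\emph{Part (iii): rescaling to a travelling-wave limit.} This is the most delicate step. I would introduce
\[
\tilde H_b(s):=b^{\nu/\beta}\,H_{b,\mu}(b^{-1/\beta}s).
\]
Using $n\nu=3$ and $\gamma=2(\nu+1)$, a direct computation shows that $\tilde H_b$ satisfies
\[
\tilde H_b^{\,n-1}\tilde H_b''' = -1 + b^{-1/\beta}s + \mu\,b^{-\gamma/\beta}\,\tilde H_b^{\,n+1}\tilde H_b',\quad s>0,
\]
with $\tilde H_b(0)=\tilde H_b'(0)=0$. In the rescaled variables \eqref{DVA} becomes
\[
\tilde H_b(s)=A^{-\nu/3}s^\nu\bigl(1-s^\beta+O(b^{-1/\beta})\bigr)
\]
uniformly on $s\in[0,\eps]$. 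Continuous dependence on parameters then yields convergence of $\tilde H_b$ to the solution $\tilde H_\infty$ of the limiting travelling-wave equation $\tilde H_\infty^{\,n-1}\tilde H_\infty'''=-1$ with $\tilde H_\infty(s)\sim A^{-\nu/3}s^\nu(1-s^\beta)$ as $s\searrow 0$. Since $\tilde H_\infty'''=-\tilde H_\infty^{1-n}<0$ wherever $\tilde H_\infty>0$, $\tilde H_\infty''$ is strictly decreasing; it must eventually become negative, after which $\tilde H_\infty'$ turns negative and drives $\tilde H_\infty$ monotonically to zero at some finite $s_*>0$. By continuity of the rescaled flow, $\tilde H_b$ also vanishes near $s_*$ for $b$ large, so $x^*_{b,\mu}\le b^{-1/\beta}(s_*+o(1))\to 0$.

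The main obstacle is part (iii): passing to the limit beyond the rescaled analyticity region $s\in[0,\eps]$ requires uniform-in-$b$ estimates on the rescaled problem, both to justify the convergence $\tilde H_b\to\tilde H_\infty$ up to a neighbourhood of $s_*$ and to prove the finiteness of the touchdown time $s_*$ for the limiting profile.
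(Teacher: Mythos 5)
Parts (i) and (ii) are correct and in the right spirit, with variations worth recording. For (i), you replace the paper's invocation of the type-$K$ comparison principle (Corollary~\ref{comparstr}) with a direct barrier argument tracking the first time one of $W,W',W'',W'''$ vanishes; since $W^{(k)}(0^+)=0$ for $k=0,1,2$ and strict monotonicity pushes these positive, the only candidate is $W'''$, which your pointwise computation excludes. This is a self-contained proof of exactly the comparison fact the paper outsources to the appendix, so the two routes are equivalent. For (ii), the paper reads the sign of $\partial_b\partial_x^k H_{b,\mu}$ off \eqref{VPB} only for $k=0,1,2$ and then obtains $k=3$ by differentiating the ODE in $b$ and applying Corollary~\ref{B2}; your observation that $\nu+\beta>2$ on $\nu\in(1,2)$ (so $(\nu+\beta)(\nu+\beta-1)(\nu+\beta-2)>0$) lets you read off $k=3$ directly from \eqref{VPB} as well, which is genuinely simpler.

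Part (iii) is where the proposal diverges and where, as you say yourself, there is a real gap. The rescaling $\tilde H_b(s)=b^{\nu/\beta}H_{b,\mu}(b^{-1/\beta}s)$ is set up correctly (your verification that $(n-1)\nu=3-\nu$ and that the gravity coefficient scales as $b^{-\gamma/\beta}$ is right), but the conclusion requires two facts that are not proved: first, that the pure travelling-wave profile with leading correction $-s^\beta$ touches down at a \emph{finite} $s_*$ (this does not follow merely from $\tilde H_\infty'''<0$, since $\tilde H_\infty''$ could decrease to a nonnegative limit; one really must analyze the ODE $\tilde H^{n-1}\tilde H'''=-1$ globally); and second, uniform-in-$b$ control on $\tilde H_b$ on an $s$-interval reaching a neighbourhood of $s_*$, which lies well outside the region $s\lesssim\eps^{1/\beta}$ where the analytic expansion is valid. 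The paper avoids both issues entirely: from \eqref{DVA} evaluated at $x=\hat{x}_{b,\mu}(\eps)=(\eps/b)^{1/\beta}$ one obtains the endpoint data \eqref{undershooting}, with a strongly negative $(H_{b,\mu}-H_{\mathrm{TW}})''\lesssim -b^{(2-\nu)/\beta}$, and from the ODE together with parts (i)--(ii) one controls $(H_{b,\mu}-H_{\mathrm{TW}})'''\lesssim x^{\nu-2}$ beyond $\hat{x}_{b,\mu}$. A second-order Taylor expansion around $\hat{x}_{b,\mu}$ then shows, since $\nu<2$, that the negative quadratic term dominates the $x^{\nu+1}$ remainder for $b$ large, forcing $H_{b,\mu}<0$ at some $x$ just past $\hat{x}_{b,\mu}\to 0$. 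That argument stays within the original variables, is fully quantitative, and does not require a separate analysis of the limiting travelling-wave ODE; if you keep the rescaling route you must supply the two missing ingredients above.
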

\begin{proof} From \eqref{DVA} we have, for $\varepsilon>0$ sufficiently small, that
\beq
\label{order1}
\partial_x^k H_{0,\mu}>\partial_x^k H_{\mathrm{TW}} \quad\mbox{on}\quad (0, \hat{x}_{0,\mu}(\eps)]\quad\mbox{for}\quad k=0,1,2.
\eeq
From equations \eqref{tfeselfts}-\eqref{tfetw} and the fact that $\mu>0,$ we have
\begin{eqnarray}
\nonumber
\left(H_{0,\mu}-H_{\mathrm{TW}}\right)'''&=&{H_{0,\mu}^{n-1}-H_{\mathrm{TW}}^{n-1}\over H_{0,\mu}^{n-1}H_{\mathrm{TW}}^{n-1}}+{x\over H_{0,\mu}^{n-1}}+\mu H_{0,\mu}^2H_{0,\mu}'\\
\nonumber
&>& {H_{0,\mu}^{n-1}-H_{\mathrm{TW}}^{n-1}\over H_{0,\mu}^{n-1}H_{\mathrm{TW}}^{n-1}}+\mu H_{0,\mu}^2H_{0,\mu}'\\
\nonumber
&>& {H_{0,\mu}^{n-1}-H_{\mathrm{TW}}^{n-1}\over H_{0,\mu}^{n-1}H_{\mathrm{TW}}^{n-1}\left(H_{0,\mu}-H_{\mathrm{TW}}\right)}\,\left(H_{0,\mu}-H_{\mathrm{TW}}\right)
\\ \label{compb=0} &+&\mu H_{0,\mu}^2\left(H_{0,\mu}-H_{\mathrm{TW}}\right)'.
\end{eqnarray}
The first assertion (i) follows from \eqref{order1}, \eqref{compb=0} and Corollary \ref{comparstr}.

 We now turn to the proof of (ii). From \eqref{VPB} we have, for $\varepsilon>0$ sufficiently small, that
\beq
\label{order1bis}
\partial_b\partial_x^k H_{b,\mu}<0 \quad\mbox{on}\quad (0, \hat{x}_{b,\mu}(\eps)]\quad\mbox{for}\quad k=0,1,2.
\eeq
Differentiating equation \eqref{tfeselfts} with respect to $b$ yields
\begin{equation}
\label{nouveaunumber}
G''' ={(n-1)(1-x)\over H^n}\,G+2\mu H H' \,G+\mu H^2\,G',
\end{equation}
where $G=\partial_b H$ and $H=H_{b,\mu}$. By \eqref{DVA}  the coefficients in the previous equation on $G$ are positive.  The assertion (ii) follows by the ordering \eqref{order1bis}, the equation \eqref{nouveaunumber} and Corollary \ref{B2}.

Finally, we turn to prove (iii). For $b\geq \max\left(\varepsilon^{1-2\beta},\mu^{\beta/\gamma}\varepsilon^{1-2\beta/\gamma}\right),$ we have $\hat{x}_{b,\mu}(\varepsilon)=\left(\varepsilon/b\right)^{1/\beta}.$ Hence, it follows from the expansion \eqref{DVA} with $b$ sufficiently large, and the fact that $\beta<1$
\begin{eqnarray}
\nonumber
H_{b,\mu}-H_{\mathrm{TW}}&\leq & 0\\
\label{undershooting}
\left(H_{b,\mu}-H_{\mathrm{TW}}\right)'&\leq & 0\\
\nonumber
\left(H_{b,\mu}-H_{\mathrm{TW}}\right)''&\lesssim & -b^{{2-\nu\over \beta}}\varepsilon^{1+ {\nu-2\over \beta}}
\end{eqnarray}
 at $x=\left(\varepsilon/b\right)^{1/\beta}.$
 Also, using the monotonicity in $b,$ we obtain for $x\leq 1,$
 \begin{eqnarray*}
 \left(H_{b,\mu}-H_{\mathrm{TW}}\right)'''&=&{-1+x\over H_{b,\mu}^{n-1}}+{1\over H_{\mathrm{TW}}^{n-1}}+\mu H_{b,\mu}^2H_{b,\mu}'\\
 &\leq &{-1+x\over H_{0,\mu}^{n-1}}+{1\over H_{\mathrm{TW}}^{n-1}}+\mu H_{b,\mu}^2H_{b,\mu}'\\&\leq &(1-x){H_{0,\mu}^{n-1}-H_{\mathrm{TW}}^{n-1}\over H_{0,\mu}^{n-1}H_{\mathrm{TW}}^{n-1}}+{x\over H_{\mathrm{TW}}^{n-1}}+\mu H_{b,\mu}^2H_{b,\mu}'\\
 &\leq&(n-1)(1-x){H_{0,\mu}-H_{\mathrm{TW}}\over H_{\mathrm{TW}}^n}+{x\over H_{\mathrm{TW}}^{n-1}}+\mu H_{b,\mu}^2H_{b,\mu}',
 \end{eqnarray*}
 where we have used (i) with $k=0$ and the inequality
 $$
 {X^\alpha-Y^\alpha\over \left(X Y\right)^\alpha}\leq {\alpha\over Y^{\alpha +1}}\left(X-Y\right),\quad \alpha>0\quad\mbox{and}\quad 0< Y<X\,.
 $$
 By \eqref{Htw}, \eqref{e5.01} and \eqref{DASYMP}, we have that
 \begin{eqnarray*}
(1-x){H_{0,\mu}-H_{\mathrm{TW}}\over H_{\mathrm{TW}}^n}\;&\sim&\;{x^{\nu+1}\over x^{n\nu}}=x^{\nu-2}\quad\mbox{as}\quad x\searrow 0,\\
{x\over H_{\mathrm{TW}}^{n-1}}\;&\sim&\;{x\over x^{(n-1)\nu}}=x^{\nu-2}\quad\mbox{as}\quad x\searrow 0,\\
H_{b,\mu}^2H_{b,\mu}'\;&\sim&\;x^{3\nu-1}=o(x^{\nu-2})\quad\mbox{as}\quad x\searrow 0,
\end{eqnarray*}
 and since $ \left(H_{b,\mu}-H_{\mathrm{TW}}\right)'''$ is regular for $x>0$, we conclude that
 \begin{equation}
 \label{xnu-2}
 \left(H_{b,\mu}-H_{\mathrm{TW}}\right)''' \lesssim x^{\nu-2},
 \end{equation}
  $\mbox{ for } \; x\in \left(\left(\varepsilon/b\right)^{1/\beta}, \,\min\{1, x_{b,\mu}^*\}\right),\; b\geq \max\left(\varepsilon^{1-2\beta},\mu^{\beta/\gamma}\varepsilon^{1-(2\beta/\gamma)}\right).$

 The Taylor expansion of $H_{b,\mu}-H_{\mathrm{TW}}$ around $\hat{x}_{b,\mu}(\varepsilon)$ reads
 \begin{eqnarray*}
 \left(H_{b,\mu}-H_{\mathrm{TW}}\right)(x)&=& \left(H_{b,\mu}-H_{\mathrm{TW}}\right)(\hat{x}_{b,\mu}(\varepsilon))\\&+&\left(x-\hat{x}_{b,\mu}(\varepsilon)\right) \left(H_{b,\mu}-H_{\mathrm{TW}}\right)'(\hat{x}_{b,\mu}(\varepsilon))\\&+&{1\over 2}\left(x-\hat{x}_{b,\mu}(\varepsilon)\right)^2 \left(H_{b,\mu}-H_{\mathrm{TW}}\right)''(\hat{x}_{b,\mu}(\varepsilon))\\&+&
 {1 \over 2}\int_{\hat{x}_{b,\mu}(\varepsilon)}^x\;\left(x-y\right)^2 \left(H_{b,\mu}-H_{\mathrm{TW}}\right)'''(y)\,dy.
 \end{eqnarray*}

 Using \eqref{undershooting} and \eqref{xnu-2} for $x\geq \hat{x}_{b,\mu}$ close to $\hat{x}_{b,\mu},$ we get
 \begin{equation*}
 H_{b,\mu}(x) \leq  H_{\mathrm{TW}}(x)-c_1 b^{{2-\nu\over \beta}}\varepsilon^{1+ {\nu-2\over \beta}}\left(x-\hat{x}_{b,\mu}(\varepsilon)\right)^2+c_2 x^{\nu+1}.
 \end{equation*}
 It follows, since $\nu<2,$ that for $b$ sufficiently large, the right hand side of the previous inequality is negative. This completes the proof of Part (iii). This finishes the  proof of the lemma.
  \end{proof}
A consequence of the previous lemma is:
\begin{proposition}
\label{P5.01}
Let $\mu>0$ be fixed. Then there exists $\bar{b}(\mu)>0$ such that the function $H_{\bar{b}(\mu),\mu}$ satisfies \eqref{tfeselfts}--\eqref{bdryselfts2}. Moreover, $H_{\bar{b}(\mu),\mu}'>0$ on $(0,1)$.
\end{proposition}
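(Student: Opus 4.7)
Fix $\mu > 0$; the plan is a single-parameter shooting over $b \geq 0$, exploiting Lemma \ref{L5.01}. Introduce the set
\[
B_- := \{b \geq 0 : x^*_{b,\mu} > 1 \text{ and } H'_{b,\mu}(x) > 0 \text{ for all } x \in (0,1]\}.
\]
By Lemma \ref{L5.01}(i), $H_{0,\mu}$ together with its first three derivatives strictly dominate those of $H_{\mathrm{TW}}$ on $(0, x^*_{0,\mu})$, so $H'_{0,\mu} > 0$ throughout; combined with the non-vanishing of $H_{0,\mu}$ from the same lemma and an upper bound (via comparison with $H_{\mathrm{TW}}$ and the ODE) to preclude blow-up on $[0,1]$, this yields $x^*_{0,\mu} > 1$, hence $0 \in B_-$. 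Continuous dependence of $H_{b,\mu}$ on $b$ makes $B_-$ open in $[0,\infty)$, and Lemma \ref{L5.01}(iii) gives $b \notin B_-$ for $b$ sufficiently large. Denote by $\bar b(\mu) \in (0,\infty)$ the supremum of the connected component of $B_-$ containing $0$; we claim $\bar b(\mu)$ is the sought parameter.

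I then pass to the limit $b \nearrow \bar b(\mu)$ within $B_-$. By the monotonicity Lemma \ref{L5.01}(ii), $H_{b,\mu}$ is pointwise non-increasing in $b$; picking any $b_0 \in B_-$ with $b_0 < \bar b(\mu)$ gives the uniform upper bound $H_{b,\mu} \leq H_{b_0,\mu}$ for $b \in (b_0,\bar b(\mu))$, precluding blow-up on $[0,1]$. Continuous dependence together with Lemma \ref{ODE-max} then produce a smooth limit $H_{\bar b(\mu),\mu}$ defined on a maximal interval and satisfying \eqref{tfeselfts}, together with $H_{\bar b(\mu),\mu} \geq 0$ and $H'_{\bar b(\mu),\mu} \geq 0$ on $(0,\min(1,x^*_{\bar b(\mu),\mu})]$, obtained by taking limits of the corresponding strict inequalities.

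The crux is a sign-definite ODE argument ruling out any zero of $H'_{\bar b(\mu),\mu}$ in the open interval $(0,1)$. Suppose $x_0 \in (0,1)$ were such a zero with $H_{\bar b(\mu),\mu}(x_0) > 0$. Since $H'_{\bar b(\mu),\mu} \geq 0$ on both sides of $x_0$, the point $x_0$ is a local minimum of $H'_{\bar b(\mu),\mu}$, forcing $H''_{\bar b(\mu),\mu}(x_0) = 0$ and $H'''_{\bar b(\mu),\mu}(x_0) \geq 0$. Evaluating \eqref{tfeselfts} at $x_0$ however gives
\[
H_{\bar b(\mu),\mu}^{n-1}(x_0)\, H'''_{\bar b(\mu),\mu}(x_0) = -1 + x_0 + 0 < 0,
\]
a contradiction. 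The same identity, together with a Rolle-type observation producing a critical point of $H_{\bar b(\mu),\mu}$ strictly between consecutive zeros, rules out $H_{\bar b(\mu),\mu}$ itself vanishing at an interior point of $(0,1]$. Therefore $H'_{\bar b(\mu),\mu} > 0$ on $(0,1)$, $H_{\bar b(\mu),\mu} > 0$ on $(0,1]$, and $x^*_{\bar b(\mu),\mu} > 1$. Since $\bar b(\mu) \notin B_-$ (by openness and the sup property), the only way the defining condition of $B_-$ can fail is $H'_{\bar b(\mu),\mu}(1) = 0$, which is precisely \eqref{bdryselfts2}. The main technical obstacle I anticipate is the clean passage to the limit together with excluding the degenerate simultaneous vanishing of $H$ and $H'$ at an interior point; both are resolved uniformly by the sign-definite ODE identity above combined with the $b$-monotonicity in Lemma \ref{L5.01}(ii).
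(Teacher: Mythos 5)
Your proposal follows the same shooting-in-$b$ strategy as the paper, and your treatment of the interior critical point is correct and arguably cleaner than the paper's: at a zero $x_0\in(0,1)$ of $H'_{\bar b(\mu),\mu}$ which is a local minimum of $H'_{\bar b(\mu),\mu}\ge 0$ one gets $H''(x_0)=0$ and $H'''(x_0)\ge 0$, while \eqref{tfeselfts} forces $H^{n-1}(x_0)H'''(x_0)=-1+x_0<0$. (The paper instead works with the \emph{first} zero of $H'$, propagates $H'''<0$ forward to conclude $H'<0$ just beyond it, and contradicts the minimality of $\bar b(\mu)$ as the infimum of the ``bad'' set.)

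The genuine gap is your opening step, $0\in B_-$. Your set $B_-$ requires $x^*_{0,\mu}>1$, i.e.\ that $H_{0,\mu}$ does not cease to exist before $x=1$, and you justify this by ``an upper bound via comparison with $H_{\mathrm{TW}}$ and the ODE''. But Lemma \ref{L5.01}(i) and Corollary \ref{comparstr} only bound $H_{0,\mu}$ and its derivatives from \emph{below} by those of $H_{\mathrm{TW}}$; they give no upper bound. The maximal interval can terminate by blow-up: the term $\mu H^2H'=\tfrac{\mu}{3}(H^3)'$ yields, after two integrations against $H'\ge 0$, only an estimate of the type $H'\lesssim 1+H^2$, which is compatible with finite-$x$ blow-up, and nothing in the paper excludes $x^*_{0,\mu}\le 1$ (particularly for large $\mu$). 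If $H_{0,\mu}$ blows up before $1$, then $0\notin B_-$ and your connected component is undefined. The paper's formulation sidesteps this entirely: its set $\mathcal B$ only asks for a zero of $H'_{b,\mu}$ in $(0,1]\cap(0,x^*_{b,\mu})$, so $0\notin\mathcal B$ follows from Lemma \ref{L5.01}(i) alone with no global-existence claim, and $x^*_{\bar b(\mu),\mu}>1$ comes out at the end from the fact that the first zero of $H'_{\bar b(\mu),\mu}$ sits at $x=1$ inside the existence interval. A smaller related point: your final dichotomy ``the only way membership in $B_-$ fails is $H'(1)=0$'' must also exclude $x^*_{\bar b(\mu),\mu}\le 1$; the monotonicity bound $H_{\bar b(\mu),\mu}\le H_{b_0,\mu}$ controls $H$ itself, but you still need $H'$ and $H''$ bounded near a putative finite endpoint before invoking the extension theorem — this should be argued, though it is at the same level of informality as the paper.
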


\begin{proof}
We have $H_{b,\mu}'(x)>0$ for $x$ near $0$ by \eqref{DVA}, and, for $b$ {sufficiently} large, $H_{b,\mu}'$ is negative somewhere by Lemma \ref{L5.01} Part (iii). Hence, there exists $\bar{x}_{b,\mu}$ such that $H_{b,\mu}'(\bar{x}_{b,\mu})=0$. Define
$$
\mathcal{B}=\Big\{\;b>0;\;\; H_{b,\mu}'(x)=0\quad\mbox{ for some}\quad x\in (0,1]\cap (0,x_{b,\mu}^*)\;\Big\}.
$$
Let $\bar{b}(\mu)=\inf\mathcal{B}$ which is well defined. Part $(i)$ of Lemma \ref{L5.01} ensures that $\bar{b}(\mu)>0$. Moreover, by continuous dependence on the parameter $b$, $\bar{b}(\mu)\in\mathcal{B}$.

 To conclude we will prove that $\bar{\bar{x}}_{\bar{b}(\mu)}=1,$ where for $b\in\mathcal{B}$, $\bar{\bar{x}}_{b}$ stands for the first zero of $H_{b,\mu}'$. Assume by contradiction that $\bar{\bar{x}}_{\bar{b}(\mu)}<1.$ Then, since $H_{\bar{b}(\mu),\mu}'(\bar{\bar{x}}_{\bar{b}(\mu)})=0,$ we get $H_{\bar{b}(\mu),\mu}'''(\bar{\bar{x}}_{\bar{b}(\mu)})=
(-1+\bar{\bar{x}}_{\bar{b}(\mu)})H_{\bar{b}(\mu),\mu}^{1-n}(\bar{\bar{x}}_{\bar{b}(\mu)})<0.$ Hence $H_{\bar{b}(\mu),\mu}'''<0$ in some neighborhood of $\bar{\bar{x}}_{\bar{b}(\mu)}$ and  $H_{\bar{b}(\mu),\mu}''$ is decreasing. Moreover, the fact that $H_{\bar{b}(\mu),\mu}'(\bar{\bar{x}}_{\bar{b}(\mu)})=0$ and $H_{\bar{b}(\mu),\mu}'>0$ on $(0, \bar{\bar{x}}_{\bar{b}(\mu)})$ implies that $H_{\bar{b}(\mu),\mu}''(\bar{\bar{x}}_{\bar{b}(\mu)})\leq 0.$ Using the fact that $H_{\bar{b}(\mu),\mu}''$ is decreasing, we deduce that
$$H_{\bar{b}(\mu),\mu}''<0, \mbox{ on } (\bar{\bar{x}}_{\bar{b}(\mu)}, \bar{\bar{x}}_{\bar{b}(\mu)}+\eta),$$
 for some  $\eta>0.$ Then
 $$H_{\bar{b}(\mu),\mu}'<0, \mbox{ on } (\bar{\bar{x}}_{\bar{b}(\mu)}, \bar{\bar{x}}_{\bar{b}(\mu)}+\eta).$$
This contradicts the definition of $\bar{b}(\mu)$ and proves that $\bar{\bar{x}}_{\bar{b}(\mu)}=1.$ It follows that $H_{\bar{b}(\mu),\mu}$ is the desired solution satisfying (\ref{tfeselfts})-(\ref{bdryselfts1})-\eqref{bdryselfts2}.
\end{proof}
In the sequel, we will denote
\begin{equation}
\label{Hmu}
H_\mu:=H_{\bar{b}(\mu),\mu},
\end{equation}
where, for any $\mu>0$, $H_{\bar{b}(\mu),\mu}$ is given by Proposition \ref{P5.01}.
\begin{proposition}
\label{p5.06}
There exist two positive constants $C_n$ and {$D_n$} depending only on $n$ such that for all $\mu >0$, we have
\bq
\label{Ineq1}
\max\left(H_\mu(1)^{n/4}, \sqrt{\mu} H_\mu(1)^{1+n/2}\right)\geq {D_n},
\eq
\begin{equation}
\label{Ineq2}
-H_\mu''(1)=|H_\mu''(1)|\leq \,C_n\,\sqrt{n+4}\,H_\mu(1)^{1-n/2},
\end{equation}
where $H_\mu$ is given by \eqref{Hmu}.
\end{proposition}
\begin{proof}
Let $\mu>0$ and $a>0$ be defined by \eqref{mu}. Then
\begin{equation}
\label{calHmu}
{\mathcal H}_\mu(y)=(n+4)^{- 1/n}\,a^{4/n}\, H_\mu(1-\frac{y}{a}),
\end{equation}
defined for $y\in [0, a)$ and extended to $(-a,a)$ by evenness solves \eqref{tfeself}-\eqref{bdryself2}. Using \cite[Lemma 3.3, p. 750]{B}, \cite[(3.32), p. 754]{B} together with the fact that
$$
{\mathcal H}_\mu(0)=(n+4)^{- 1/n}\,a^{4/n}\, H_\mu(1),\;\;\;{\mathcal H}_\mu''(0)=(n+4)^{- 1/n}\,a^{-2+4/n}\, H_\mu''(1)
$$
we obtain \eqref{Ineq1} and \eqref{Ineq2}. This finishes the proof of Proposition \ref{p5.06}.
\end{proof}
\begin{remark}
The constant $C_n$ appearing in \eqref{Ineq2} is as in \cite[(3.32), p. 754]{B} while the constant {$D_n$} can be taken as
$$
{D_n}=\frac{(n+3)^{-1/2} (n+4)^{-1/4}}{\sqrt{48}}.
$$
\end{remark}
To satisfy \eqref{bdryselfts3} it suffices to prove the following.
\begin{proposition}
\label{barmu}
There exists $\bar{\mu}>0$ such that $H_{\bar{\mu}}$ satisfies \eqref{bdryselfts3}, namely
\begin{equation}
\label{1.7d}
\int_{0}^1 H_{\bar{\mu}}(x) dx = {\sqrt{n+3}\over
2\sqrt{\bar{\mu}}}\M,
\end{equation}
where $\M>0$ is fixed by \eqref{bdry3}.
\end{proposition}
\begin{proof}
It suffices to show that the map
$$
\mathcal{M} : 0\leq \mu \longmapsto \mathcal{M}(\mu):= {2\sqrt{\mu}\over \sqrt{n+3}} \int_{0}^1 H_{\mu}(x) dx
$$
satisfies $\mathcal{M}([0,\infty))=[0,\infty)$. Note that  $\mathcal{M}$ is continuous with respect to $\mu$ and $\mathcal{M}(0)=0$. To conclude the proof, we will show that there exists a sequence $(\mu_j)$ such that
\begin{equation}
\label{muj}
\mathcal{M}(\mu_j) \to \infty \quad\mbox{as}\quad j\to \infty.
\end{equation}
The proof of \eqref{muj} will be done in two steps.\\

\noindent{\bf Step 1.} Define
\begin{equation}
\label{alphamu}
\alpha(\mu)=(n+4)^{-\frac{1}{n+4}}\, (n+3)^{-\frac{2}{n+4}}\, \mu^{\frac{2}{n+4}}\, H_\mu(1).
\end{equation}
We claim that
\begin{equation}
\label{supalpha}
\sup_{\mu>0}\, \alpha(\mu)=\infty.
\end{equation}
By contradiction: assume that $\alpha(\mu)\lesssim 1$. Then
$$
H_\mu(1)\lesssim \mu^{-\frac{2}{n+4}}.
$$
Therefore
\begin{eqnarray*}
H_\mu(1)^{n/4}&\lesssim& \mu^{-\frac{n}{2(n+4)}},\\
\sqrt{\mu}\, H_\mu(1)^{1+n/2}&\lesssim& \mu^{-\frac{n}{2(n+4)}}.
\end{eqnarray*}
By using \eqref{Ineq1}, we should have $ 1\lesssim \mu^{-\frac{n}{2(n+4)}}$ for all $\mu>0$. This leads to a contradiction for $\mu$ large and the proof of the claim follows.

From \eqref{supalpha} we deduce that there exists a sequence $(\mu_j)$ such that
\begin{equation}
\label{alphaj}
\alpha_j:=\alpha(\mu_j)\to \infty\quad \mbox{as}\quad j\to \infty.
\end{equation}

\noindent{\bf Step 2.} Let ${\mathbf v}_j=\mathbf{v}_j(y)$ be the solution of
\begin{equation}\label{Vj}
 \left\{
\begin{array}{ll}
{\mathbf v}_j''&=-2C_n\, \alpha_j^{1-n/2}+\alpha_j^2(n+3)\left({\mathbf v}_j-\alpha_j\right) \;\;\mbox{for}\;\; y>0, \\
{\mathbf v}_j(0)&=\alpha_j,\quad {\mathbf v}_j'(0)=0,
\end{array}
\right.
\end{equation}
where $C_n$ is as in \eqref{Ineq2}. The solution of \eqref{Vj} is given explicitly by

\begin{equation*}
\mathbf{v}_j(y)=\frac{1}{(n+3)\alpha_j^{1+n/2}}\,
\bigg[ 2C_n+(n+3)\alpha_j^{2+n/2}-2C_n\,\cosh\left(\alpha_j\sqrt{n+3}\, y\right)\bigg].
\end{equation*}
Using same arguments as in the proof of \cite[Lemma 3.7, p. 754]{B} we have that
\begin{equation*}
\mathcal{H}_{\mu_j}(y)\geq \mathbf{v}_j(y)\;\;\mbox{for}\;\; y>0,
\end{equation*}
where $\mathcal{H}_{\mu_j}$, given by \eqref{calHmu}, is extended by zero outside its support. Let $\tilde{y}_j$ be such that $\mathbf{v}_j(\tilde{y}_j)=\frac{\alpha_j}{2}$. Then
\begin{eqnarray*}
\int_0^{a_j}\,\mathcal{H}_{\mu_j}(y)\,dy&\geq& \int_0^{\tilde{y}_j}\,\mathbf{v}_j(y)\,dy\\
&\geq& \frac{\alpha_j}{2}\,\tilde{y}_j=\frac{1}{2\sqrt{n+3}}\,\arg\cosh\,\left(1+\frac{n+3}{4C_n}\alpha_j^{2+n/2}\right).
\end{eqnarray*}
Using \eqref{alphaj} we deduce that
\begin{equation*}
\int_0^{a_j}\,\mathcal{H}_{\mu_j}(y)\,dy \to\infty \quad\mbox{as}\quad j\to\infty.
\end{equation*}
Now we can conclude the proof of \eqref{muj}. Indeed, we have that
$$
\mathcal{M}(\mu_j)=2\int_0^{a_j}\,\mathcal{H}_{\mu_j}(y)\,dy.
$$
It follows that, for $\M>0$ given by \eqref{bdry3}, there exists $\bar{\mu}>0$ such that $\mathcal{M}(\bar{\mu})=\M$. This finishes the proof of Proposition \ref{barmu}.
\end{proof}
\begin{proof}[Proof of Theorem \ref{th1}] The proof of Part (i) follows from Proposition \ref{p2},  \eqref{e5.01} and Proposition \ref{P5.01}. The proof of Part (ii) follows by Proposition \ref{barmu} with $H_{\bar{\mu}}=H_{\bar{b}(\bar{\mu}),\bar{\mu}}.$ This completes the proof of Theorem \ref{th1}.
\end{proof}

\section{Conclusions}
We consider self-similar source-type solutions $H$ for the thin-film equation with a regularizing second order term and with mobility exponent $n\in ({3\over2}, 3)$ in dimension one. We show the existence of a solution having the behavior: $H(x)=H_{\mathrm{TW}}(x)\left(1+v(x,x^\beta,x^\gamma)\right)$, where $H_{\mathrm{TW}}$ is the traveling-wave, $\beta\in(0,1)$, $\gamma=2+\frac{6}{n}$, and $v(x_1,x_2,x_3)$ is analytic near $(0,0,0)$ with $v(0,0,0)=0$, $\partial_2 v(0,0,0)<0$. This improves the previously published results {\cite{B}} about qualitative behavior of the solution nearby the interface. The previous asymptotic shows that the source-type solution for the thin-film equation with gravity is an analytic function in the  three spacial variables $(x_1,x_2,x_3)$ where $x_1:=x$, $x_2:=x^\beta$ and  $x_3:=x^\gamma$. The third variable is new, unless $n=2$, with respect to the  known expansion for the standard thin-film equation.

This shows the effect of the gravity on the expansion of source type solutions. We expect this to be the generic behavior of solutions of the thin-film equation with gravity \eqref{tfe} and to be helpful for the well-posedness for \eqref{tfe}. In fact, it is shown in \cite{GGKO} that the expansion, given in \cite{GGO}, of the source type solution for the standard thin-film equation \eqref{stfe} has an effect on the behavior of the solutions. Also, this expansion was useful in \cite{GGKO} to obtain a well-posedness result for \eqref{stfe}. See also \cite{BF1990, G, GM2018, Knup2011, km.2013, km.2012} for well-posedness for \eqref{stfe}.

Source-type self-similar solutions are useful to describe the long time behavior of a large class of solutions to thin-film equations. We expect that the source-type self-similar solutions we construct here will attract for large time some global solutions.  This has been done for \eqref{stfe} in
\cite{CU2005, CU2007, CU2014, CT2002, GGO2016, GKO, go.2002, G}. See also \cite{GIM, MMT2021} for other asymptotic behavior.\\

\begin{appendix}
\section{Existence and uniqueness for ODE}
\label{odeexistence}
Consider the ordinary differential equation
\begin{equation}
\label{a1}
y'=f(x,y)
\end{equation}
where $f:E\subset \R\times \R^d\to \R^d$, with $E$ an open set. We recall the following existence and uniqueness results  for \eqref{a1}.
\begin{theorem}{\cite[Theorem 3.1, p. 18]{Hale}}\label{Hale} If $f(x,y)$ is continuous in $E$ and locally Lipschitz with respect to $y$ in $E,$ then for any $(x_0,y_0)\in E,$ there exists a unique solution $y(x)$ of \eqref{a1} satisfying $y(x_0)=y_0.$
\end{theorem}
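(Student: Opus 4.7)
The statement is the classical Picard--Lindelöf existence and uniqueness theorem for systems of ODEs, and I would follow the standard fixed-point approach.

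First I would convert the ODE into an equivalent integral equation. Any $C^1$ solution $y$ of \eqref{a1} with $y(x_0)=y_0$ satisfies
\[
y(x)=y_0+\int_{x_0}^x f(s,y(s))\,ds,
\]
and conversely every continuous $y$ solving this integral equation is automatically $C^1$ and solves \eqref{a1}. This turns the problem into a fixed-point problem for the Picard operator $T[y](x):=y_0+\int_{x_0}^x f(s,y(s))\,ds$.

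Next I would choose the right functional setup. Since $E$ is open and $f$ is continuous, there exist $\delta_1,\delta_2>0$ and $M>0$ such that the closed box $R:=[x_0-\delta_1,x_0+\delta_1]\times \overline{B}(y_0,\delta_2)$ lies in $E$ and $|f|\leq M$ on $R$; by local Lipschitz continuity, after shrinking $\delta_1,\delta_2$ if necessary, we also get a constant $L>0$ with $|f(x,y_1)-f(x,y_2)|\leq L|y_1-y_2|$ on $R$. I then pick
\[
h:=\min\Bigl\{\delta_1,\;\tfrac{\delta_2}{M},\;\tfrac{1}{2L}\Bigr\}
\]
and work on the Banach space $X:=C([x_0-h,x_0+h];\R^d)$ equipped with the sup-norm, and on the closed subset $X_0\subset X$ consisting of those $y$ with $\|y-y_0\|_\infty\leq \delta_2$.

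Then I would verify the two properties needed by Banach's fixed-point theorem. For invariance, if $y\in X_0$ then $|T[y](x)-y_0|\leq M|x-x_0|\leq Mh\leq \delta_2$, so $T(X_0)\subset X_0$. For contractivity, for $y_1,y_2\in X_0$,
\[
|T[y_1](x)-T[y_2](x)|\leq L\,|x-x_0|\,\|y_1-y_2\|_\infty\leq Lh\,\|y_1-y_2\|_\infty\leq \tfrac{1}{2}\|y_1-y_2\|_\infty.
\]
Since $X_0$ is closed in the complete space $X$, Banach's theorem yields a unique fixed point $y\in X_0$, which is the desired local solution. Uniqueness among \emph{all} solutions (not only those staying in $\overline{B}(y_0,\delta_2)$) follows by a Grönwall argument: if $y_1,y_2$ both solve the initial value problem on a common interval, then on the subinterval where both stay in a common Lipschitz neighborhood, $w(x):=|y_1(x)-y_2(x)|$ satisfies $w(x)\leq L\bigl|\int_{x_0}^x w(s)\,ds\bigr|$, so $w\equiv 0$, and a standard connectedness argument extends this equality to the full common interval.

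The only subtle point is the Lipschitz step: the hypothesis gives only \emph{local} Lipschitz continuity, not a global constant, so one must be careful to restrict to a compact neighborhood of $(x_0,y_0)$ on which a uniform $L$ exists before invoking the contraction principle; everything else is routine.
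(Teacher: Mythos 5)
This statement is quoted verbatim from Hale's textbook (\emph{Ordinary Differential Equations}, Theorem~3.1, p.~18) and the paper gives no proof of it, using it only as a black box in the proof of Lemma~\ref{ODE-max}. Your proof is the standard Picard--Lindel\"of contraction-mapping argument, which is correct and is essentially the proof given in Hale's book (set up the Picard operator on a small interval, verify self-mapping and contraction on a closed ball, apply Banach's fixed-point theorem, then upgrade local uniqueness to uniqueness among all solutions via Gr\"onwall and a connectedness argument); in particular you correctly handle the subtlety that local Lipschitz continuity only yields a uniform Lipschitz constant after restricting to a compact box inside $E$.
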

We also recall the following extension result.

\begin{theorem}{\cite[Theorem 3.1, p. 12]{Hart}}\label{Hart} Let $f(x,y)$ be continuous on an open set $E$ and let $y(x)$ be a solution of \eqref{a1} on some interval. Then $y(x)$ can be extended (as a solution) over a maximal interval of existence $(x_*,x^*).$ Also, if $(x_*,x^*)$ is a maximal interval of existence, then $y(x)$ tends to the boundary $\partial E$ of $E$ as $x\to x_*$ and $x\to x^*.$
\end{theorem}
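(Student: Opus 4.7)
The plan is to establish Theorem \ref{Hart} in two stages: first construct a maximal interval of existence by gluing all possible extensions of the given solution via Zorn's Lemma, and then analyze the behavior near the finite endpoints of that interval by contradicting local existence. Beyond the standing hypothesis of continuity of $f$, the only tool required is Peano's local existence theorem together with a compactness argument; no Lipschitz condition is used.

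For the first stage, let $\mathcal{F}$ be the family of all solutions $\tilde y$ of \eqref{a1} whose domain is an interval containing the domain of the given $y$, and which restrict to $y$ on that smaller domain. Partially order $\mathcal{F}$ by graph inclusion. Any totally ordered chain has an upper bound, since the union of the domains is an interval, the pointwise values agree on overlaps by the chain property, and the resulting function solves \eqref{a1} where defined. Zorn's Lemma then produces a maximal element $Y$ defined on some $(x_*, x^*)$ with $-\infty \leq x_* < x^* \leq +\infty$. This interval is open: if, say, $x^*$ were attained with $(x^*, Y(x^*)) \in E$, Peano's theorem at that point would yield a local continuation, contradicting maximality after concatenation.

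For the second stage, suppose by contradiction that $x^* < +\infty$ while the graph of $Y$ does not tend to $\partial E$ as $x \to x^*$. Then some compact set $K \subset E$ contains a sequence $(x_n, Y(x_n))$ with $x_n \to x^*$. Choose a larger compact $\tilde K \subset E$ containing a closed $\rho$-neighborhood of $K$, and set $M := \sup_{\tilde K} |f| < \infty$. A uniform Peano estimate on cylinders of half-widths depending only on $\rho$ and $M$ yields a local existence time $\delta > 0$ valid from any starting point in $K$. Picking $n$ with $x^* - x_n < \delta$ produces a solution past $x^*$ which can be glued to $Y$, contradicting maximality; the same argument at the left endpoint handles $x_*$. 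The principal subtlety is that continuity of $f$ alone does not guarantee uniqueness, so pointwise matching of solutions is not automatic — this is precisely why the maximal element must be obtained through Zorn's Lemma on graph inclusion rather than by a direct supremum over extension lengths. In the application to equation \eqref{ODE}, where the right-hand side is locally Lipschitz on $\{H > 0\}$, both steps become essentially automatic, and the abstract boundary behavior asserted here is the counterpart of the blow-up or vanishing of $H$ at the endpoints of the maximal interval invoked in Lemma \ref{ODE-max}.
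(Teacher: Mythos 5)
First, note that the paper does not prove Theorem \ref{Hart} at all: it is quoted verbatim from Hartman's book and used as a black box in the proof of Lemma \ref{ODE-max}, so there is no in-paper argument to compare against. Your first stage (Zorn's lemma on the family $\mathcal{F}$ of extensions ordered by graph inclusion, plus Peano at an attained endpoint to show the maximal domain is open) is sound and is essentially the standard construction.

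The genuine gap is in your second stage. From a point $(x_n, Y(x_n))\in K$ with $x^*-x_n<\delta$, Peano gives you \emph{some} solution on $[x_n, x_n+\delta]$; concatenating it with $Y$ restricted to $(x_*,x_n]$ produces a solution defined past $x^*$, but — precisely because of the non-uniqueness you yourself flag — this new solution need not agree with $Y$ on $(x_n,x^*)$. Its graph therefore does not contain the graph of $Y$, so it does not contradict the maximality of $Y$ under graph inclusion (equivalently, it is not an \emph{extension of $Y$}, which is what ``maximal interval of existence'' forbids). You have only shown that some solution in $\mathcal{F}$ lives on a longer interval, which is compatible with $Y$ being maximal. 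The standard repair, and what Hartman actually does, is to first upgrade the subsequential information to a full limit: after passing to a subsequence, $(x_n,Y(x_n))\to(x^*,y_0)\in E$; choosing a closed cylinder around $(x^*,y_0)$ contained in $E$ on which $|f|\le M$, the bound $|Y'|\le M$ forces $Y(x)\to y_0$ as $x\to x^*$ (an excursion out of a small ball and back would cost a fixed amount of time, which is unavailable as $x\to x^*$). Then $Y$ extends continuously to $x^*$, the integral form of \eqref{a1} passes to the limit so the extension is a solution on the half-closed interval, and Peano at $(x^*,y_0)$ now genuinely extends $Y$ itself, yielding the contradiction. With this insertion your argument is complete; and, as you observe, in the actual application (Lemma \ref{ODE-max}) $f$ is locally Lipschitz, uniqueness holds, and the issue disappears.
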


We now give the proof of Lemma \ref{ODE-max}.

\begin{proof}[Proof of Lemma \ref{ODE-max}.]  Let $E=\R\times \Big((0,\infty)\times \R^2\Big)$ and $f : E \to \R^3$ given by
$$
f(x,y)=\Big(y_2,\;y_3,\;(x-1)y_1^{1-n}+\mu y_1^2y_2\Big),\;\;y=(y_1,y_2,y_3).
$$
Clearly $f$ is continuous in $E$ and locally Lipschitz with respect to $y.$ The problem \eqref{ODE1}-\eqref{ODE2} is equivalent to
\begin{eqnarray*}
y'&=&f(x,y),\; \\
 y(x_0)&=&(U_0,U_1,U_2)\in (0,\infty)\times \R^2,
\end{eqnarray*}
where $ y(x)=\Big(U(x),\,U'(x),\,U''(x)\Big).$ Using Theorems \ref{Hale}-\ref{Hart}, we obtain the existence of a unique maximal solution on $(x_*,x^*)$ with $-\infty\leq x_*<x_0<x^*\leq\infty.$
\end{proof}
\section{Useful tools} \label{touls}
In this appendix we recall some known facts for ordinary differential equations. We have the following comparison result.
\begin{proposition}
\label{compar1}
Assume that the function $y: [a,b]\to \R$ satisfies the ordinary differential inequality
\begin{equation}
\label{ineq1}
y'''(x)\geq A(x)y(x)+B(x)y'(x)+C(x)y''(x),\; \; a\leq x\leq b,
\end{equation}
where $A,\;B,\; C$ are nonnegative continuous functions.

If $y^{(k)}(a)\geq 0,\; k=0,\; 1,\; 2$ then
\begin{equation}
\label{B-22}
y'''(x)\geq 0,\;  \; a\leq x\leq b.
\end{equation}
\end{proposition}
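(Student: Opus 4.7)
The conclusion $y'''(x)\geq 0$ on $[a,b]$ will follow at once from the differential inequality \eqref{ineq1} and the nonnegativity of $A,B,C$ as soon as one establishes that $y,y',y''$ are themselves nonnegative on $[a,b]$. Thus the plan reduces to proving this three-fold nonnegativity by a continuity/bootstrap argument, and then reading off $y'''\geq 0$ directly from \eqref{ineq1}.

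Concretely, I would set
\[
T := \sup\{t\in [a,b] : y(s), y'(s), y''(s) \geq 0 \text{ for all } s\in[a,t]\}.
\]
This set is nonempty because $y^{(k)}(a)\geq 0$ for $k=0,1,2$, and it is closed by continuity of $y,y',y''$, so the supremum is attained. On $[a,T]$ we have $y,y',y''\geq 0$, hence by \eqref{ineq1} combined with $A,B,C\geq 0$ one gets $y'''\geq 0$ there. Integrating three times in succession yields $y''(x)\geq y''(a)\geq 0$, then $y'(x)\geq y'(a)\geq 0$, then $y(x)\geq y(a)\geq 0$, all on $[a,T]$. The whole point is then to show $T=b$.

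Assume for contradiction $T<b$. The delicate case is when one of $y(T), y'(T), y''(T)$ vanishes, since then continuity alone does not propagate nonnegativity past $T$. To overcome this I would use an exponential regularization: set $y_\varepsilon(x) := y(x) + \varepsilon e^{\lambda(x-a)}$ with $\lambda$ chosen so large that $\lambda^3 - C(x)\lambda^2 - B(x)\lambda - A(x) \geq \delta > 0$ uniformly on $[a,b]$, which is possible because $A,B,C$ are continuous on a compact interval. A short computation shows that $y_\varepsilon$ satisfies a \emph{strict} version of \eqref{ineq1}, and moreover $y_\varepsilon^{(k)}(a)>0$ for $k=0,1,2$. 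Repeating the same bootstrap for $y_\varepsilon$, at any putative $T_\varepsilon<b$ where the nonnegativity would first fail, one has strictly $y_\varepsilon'''>0$ on $[a,T_\varepsilon]$ by the strict inequality, hence $y_\varepsilon''(T_\varepsilon)>y_\varepsilon''(a)>0$, then $y_\varepsilon'(T_\varepsilon)>y_\varepsilon'(a)>0$, then $y_\varepsilon(T_\varepsilon)>y_\varepsilon(a)>0$, contradicting the fact that one of these three quantities must vanish at $T_\varepsilon$. Hence $T_\varepsilon=b$, and passing to the limit $\varepsilon\searrow 0$ gives $y,y',y''\geq 0$ on $[a,b]$, which closes the argument.

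The main obstacle, as indicated above, is precisely the possibility that $y, y'$ or $y''$ touches zero at the right endpoint of the positivity interval, which would stop naive continuity in its tracks; the exponential perturbation is the standard device to manufacture strict positivity and bypass this pathology, after which the Metzler-type structure of the system $(y,y',y'')\mapsto(y',y'',y''')$ (all off-diagonal couplings being nonnegative) carries the argument through.
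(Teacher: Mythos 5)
Your proof is correct but takes a genuinely different route from the paper's. The paper rewrites the third-order inequality as a first-order system $Y'\geq f(x,Y)$ for $Y=(y,y',y'')$ with $f(x,Y)=(y_2,y_3,A y_1+B y_2+C y_3)$, observes that $f$ is of ``type K'' (quasimonotone/Kamke structure: each component is nondecreasing in the off-diagonal arguments, which here is just $A,B,C\geq 0$), and then invokes Coppel's comparison theorem \cite[Theorem 10, p.~29]{Coppel} with the identically-zero comparison function to conclude $Y\geq 0$ on $[a,b]$; the bound $y'''\geq 0$ is then read off from the inequality. Your argument instead reproves the needed comparison result from scratch: you bootstrap a maximal positivity interval and, to handle the degenerate case where $y,y',y''$ touch zero at its right endpoint, you perturb by $\varepsilon e^{\lambda(x-a)}$ with $\lambda$ so large that $\lambda^3-C\lambda^2-B\lambda-A\geq\delta>0$ (possible since $A,B,C$ are continuous on a compact interval), which turns the inequality and the initial data strict, runs the bootstrap without obstruction, and lets $\varepsilon\searrow 0$. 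This exponential-regularization device is in fact a standard way to prove the type-K comparison theorem itself, so your proof is a self-contained elementary substitute for the textbook citation: it is longer, but it exposes the mechanism (the ``Metzler'' sign structure you identify is precisely Coppel's type-K hypothesis), whereas the paper gains brevity at the cost of an external reference.
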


To prove Proposition \ref{compar1}, we need to introduce the  type $K$ function.
\begin{definition}{\cite[p. 27]{Coppel}}
\label{defvector}
Let $Y=(y_1,y_2,y_3),\; Z=(z_1,z_2,z_3)$ be two vectors in $\R^3.$ We say that $Y\geq Z$, if $y_i\geq z_i,\;$ for all $i=1,\; 2,\; 3$.
\end{definition}

\begin{definition}{\cite[p. 27]{Coppel}}
\label{K-type}
A vector function $f=(f_1,f_2,f_3)$ of a vector variable $Y=(y_1,y_2,y_3)$ will be said to be of type K in a set $S$ if for each $i=1,\; 2,\; 3$ we have $f_i(Y)\leq f_i(Z)$ for any two vectors $Y=(y_1,y_2,y_3),\; Z=(z_1,z_2,z_3)$ in $S$ with $y_i=z_i$ and $y_j\leq z_j$ ($j=1,\; 2,\; 3;\; j\neq i$).
\end{definition}

\begin{proof}[Proof of Proposition \ref{compar1}] Let $Y=(y_1,y_2,y_3)$ and $f(x,Y)=(y_2,y_3, A(x)y_1+B(x)y_2+C(x)y_3).$ Then using Definition \ref{defvector}, the differential inequality reads
$$Y'(x)\geq f(x,Y(x)),\; a\leq x\leq b,$$ where $Y=(y,y',y'').$ Since $A,\; B,\; C$ are nonnegative then by Definition \ref{K-type}, $f$ is type K. Using \cite[Theorem 10, p. 29]{Coppel}, and since $Y(a)\geq 0=(0,0,0)$ we get $Y(x)=(y(x),y'(x),y''(x))\geq (0,0,0), \; a\leq x\leq b.$ Using the differential inequality and the fact that $A,\; B,\; C$ are nonnegative we get \eqref{B-22}. This completes the proof of the proposition.
\end{proof}

From Proposition \ref{compar1} we deduce the following results.
\begin{corollary}
\label{comparstr}
Assume that the function $y: [a,b]\to \R$ satisfies the ordinary differential inequality
\begin{equation}
\label{ineq2-1} y'''(x)> A(x)y(x)+B(x)y'(x)+C(x)y''(x),\; \; a\leq x\leq b,
\end{equation}
where $A,\;B,\; C$ are positive continuous functions.

If $y^{(k)}(a)> 0,\; k=0,\; 1,\; 2$ then
\begin{equation}
\label{ineq2}y'''(x)> 0,\;  \; a\leq x\leq b.
\end{equation}
\end{corollary}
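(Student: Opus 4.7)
The plan is to deduce this strict version as a direct consequence of Proposition~\ref{compar1}, without re-running any comparison argument from scratch. First I verify that the hypotheses of Proposition~\ref{compar1} apply to the same function $y$: the strict initial conditions $y^{(k)}(a)>0$ imply $y^{(k)}(a)\geq 0$ for $k=0,1,2$; the strict differential inequality \eqref{ineq2-1} implies the non-strict one \eqref{ineq1}; and the assumption that $A,B,C$ are positive implies they are non-negative. Hence Proposition~\ref{compar1} is applicable.

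Second, I extract from Proposition~\ref{compar1} more than its stated conclusion $y'''\geq 0$. Inspection of its proof shows that the type-$K$ argument, via the vector comparison principle (\cite[Theorem~10, p.~29]{Coppel}), actually produces the stronger component-wise inequality $Y(x)=(y(x),y'(x),y''(x))\geq(0,0,0)$ on the whole interval $[a,b]$. In particular $y, y', y''\geq 0$ on $[a,b]$.

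Third, since $A(x),B(x),C(x)>0$ and $y(x),y'(x),y''(x)\geq 0$ on $[a,b]$, the combination
\[
A(x)y(x)+B(x)y'(x)+C(x)y''(x)\geq 0\quad\text{on }[a,b].
\]
Plugging this lower bound into the strict hypothesis \eqref{ineq2-1} yields
\[
y'''(x)>A(x)y(x)+B(x)y'(x)+C(x)y''(x)\geq 0,\qquad a\leq x\leq b,
\]
which is exactly \eqref{ineq2}.

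There is no serious obstacle: the improvement from ``$\geq$'' to ``$>$'' is for free once one couples the sign information on $(y,y',y'')$ coming from Proposition~\ref{compar1} with the fact that the inequality \eqref{ineq2-1} is itself strict. The only point worth noting is that, in this argument, the strict positivity of the initial data $y^{(k)}(a)>0$ plays no essential role; the weaker $y^{(k)}(a)\geq 0$ would already suffice for the same conclusion, since the strictness is supplied entirely by the strict differential inequality and the positivity of the coefficients $A,B,C$.
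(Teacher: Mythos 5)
Your proof is correct and follows essentially the same route as the paper: apply Proposition~\ref{compar1} (really, the intermediate conclusion $y,y',y''\geq 0$ on $[a,b]$ obtained in its proof via the type-$K$ comparison theorem), then use the positivity of $A,B,C$ together with the strict inequality \eqref{ineq2-1} to upgrade $y'''\geq 0$ to $y'''>0$. Your side remark that the strictness $y^{(k)}(a)>0$ is not needed and $y^{(k)}(a)\geq 0$ would suffice is accurate, and you are also right to flag explicitly that one is using the component-wise nonnegativity of $(y,y',y'')$ coming from the proof of Proposition~\ref{compar1} rather than only its stated conclusion $y'''\geq 0$ --- the paper's phrase ``Using Proposition~\ref{compar1}, we deduce that $y^{(k)}(x)\geq 0$ for $k=0,1,2,3$'' does the same thing but less transparently.
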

\begin{proof}[Proof of Corollary \ref{comparstr}] Using Proposition \ref{compar1}, we deduce that $y^{(k)}(x)\geq 0,\;  \; a\leq x\leq b,\; k=0,\; 1,\; 2,\; 3.$ Then the desired inequality \eqref{ineq2} follows immediately from \eqref{ineq2-1}.
\end{proof}

\begin{corollary}
\label{B2}
Assume that the function $y: [a,b]\to \R$ satisfies the ordinary differential equation
\begin{equation}
\label{eq22} y'''(x)= A(x)y(x)+B(x)y'(x)+C(x)y''(x),\; \; a\leq x\leq b,
\end{equation}
where $A,\;B,\; C$ are positive continuous functions.
If $y^{(k)}(a)< 0,\; k=0,\; 1,\; 2$ then
\begin{equation}
\label{ineq2-2}y'''(x)\leq 0,\;  \; a\leq x\leq b.
\end{equation}
\end{corollary}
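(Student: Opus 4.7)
The plan is to reduce this statement directly to Proposition \ref{compar1} by a sign change. Set $w:=-y$ on $[a,b]$. Since the ODE $y'''=A(x)y+B(x)y'+C(x)y''$ is linear and homogeneous in $y$, the function $w$ satisfies the same equation $w'''=A(x)w+B(x)w'+C(x)w''$ on $[a,b]$. The hypothesis $y^{(k)}(a)<0$ for $k=0,1,2$ translates into $w^{(k)}(a)>0\geq 0$ for $k=0,1,2$, so the initial condition of Proposition \ref{compar1} is satisfied for $w$.

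The coefficients $A,B,C$ are assumed positive, hence in particular nonnegative, so Proposition \ref{compar1} applies to $w$ (reading the equality $w'''=Aw+Bw'+Cw''$ as the inequality $w'''\geq Aw+Bw'+Cw''$). It yields $w'''(x)\geq 0$ on $[a,b]$, which is exactly $y'''(x)\leq 0$ on $[a,b]$, the desired conclusion.

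There is essentially no obstacle: the only thing to verify is that the linear ODE is invariant under $y\mapsto -y$, which is immediate, and that the strict negativity of the initial data at $x=a$ is more than enough to meet the nonnegativity hypothesis of Proposition \ref{compar1}. No use of the type $K$ machinery beyond what is already packaged in Proposition \ref{compar1} is needed.
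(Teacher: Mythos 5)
Your proposal is correct and follows exactly the paper's own argument: set $w=-y$ (the paper calls it $z$), check that the sign change preserves the linear ODE and turns the strictly negative initial data into nonnegative ones, and invoke Proposition \ref{compar1}. You simply spell out the routine verifications that the paper leaves implicit.
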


\begin{proof}[Proof of Corollary \ref{B2}] Put $z=-y$. Then $z$ satisfies the assumptions in Proposition \ref{compar1}. Hence we obtain \eqref{ineq2-2}.
\end{proof}
\end{appendix}


\section*{ Acknowledgements}
\noindent The authors are much obliged to M. V. Gnann and N. Masmoudi for interesting discussions. {The authors thank the reviewers for the careful reading of the manuscript and helpful comments.}

\end{document}